\documentclass{amsart}

\usepackage{amssymb,amsmath}
\usepackage{color}
\usepackage{enumerate}
\usepackage[hdivide={3cm,*,3cm}]{geometry}

\title{On the family of measurable sets having the upper positive density}
\author{Jacek Hejduk \& Renata Wiertelak \& W{\l}adys{\l}aw Wilczy\'{n}ski}

\address{Faculty of Mathematics and Computer Science,
      University of {\L}\'{o}d\'{z},
      Banacha 22, PL-90-238 {\L}\'{o}d\'{z}, Poland}
\email{jacek.hejduk@wmii.uni.lodz.pl\\
renata.wiertelak@wmii.uni.lodz.pl\\
wladyslaw.wilczynski@wmii.uni.lodz.pl}

\date{}
\keywords{strong generalized topology, density topology, approximately continuous functions, separation axioms}
\subjclass[2010]{54A10, 28A05}

\newtheorem{theorem}{Theorem}

\newtheorem{property}{Property}
\newtheorem{lemma}{Lemma}
\newtheorem{corollary}{Corollary}

\newtheorem{remark}{Remark}

\begin{document}
\maketitle

Declarations of interest: none

This research did not receive any specific grant from funding agencies in the public, commercial, or not-for-profit sectors.

\begin{abstract}
The essence of the density topology lies in the family of Lebesgue measurable sets where each point of a set is a density point of that set. The motivation of this work is to investigate the family of measurable sets for which, at every point within a set belonging to this family, the upper density of that set is positive. We obtain a strong generalized topology, and its essential properties are demonstrated in comparison with those of the classical density topology.
\end{abstract}

\section{Introduction}

The density topology is a significant concept in real analysis. Its origins lie in the work of Haupt and Pauc (1952, see \cite{hw-hp}) and it has been extensively studied by Goffman and Waterman (1961, see \cite{hw-gw}), Goffman, Neugebauer, and Nishiura (1961, see \cite{hw-gnn}), and Tall (1976, see \cite{hw-t}). This paper explores the properties of the family of Lebesgue measurable sets where, at every point within a given set, the upper density is positive. We will demonstrate that this family forms a generalized topology.

A generalized topology on a nonempty set \(X\) is defined as a family \(\gamma\) of subsets of \(X\) such that \(\emptyset \in \gamma\) and \(\bigcup_{t \in T} G_t \in \gamma\) whenever \(\{G_t : t \in T\} \subset \gamma\). The pair \((X, \gamma)\) is referred to as a generalized topological space. If \(X \in \gamma\), then \((X, \gamma)\) is called a strong generalized topological space.

The notion of a generalized topology was introduced independently by several mathematicians, with E.H. Moore presenting this concept in 1910 in \cite{hw-m}. \'{A}. Cs\'{a}sz\'{a}r also revisited the concept of generalized topology in \cite{hw-c} while investigating generalized open sets in topological spaces.

This paper explores the properties of open sets, closed sets, nowhere dense sets, and first category sets within the defined generalized topology. Several results are analogous to those of the classical density topology, but differences arise concerning approximative continuity. Notably, continuity with respect to the generalized density topology does not imply the Darboux property. The regularity and normality of this generalized topology are established, while the complete regularity remains an open question.

Throughout this paper, we will use the following standard notation: \(\mathbb{R}\) denotes the set of real numbers, \(\mathbb{N}\) the set of natural numbers, \(\mathcal{L}\) the family of Lebesgue measurable subsets of \(\mathbb{R}\), and \(\mathbb{L}\) the family of Lebesgue null sets. The Lebesgue measure is denoted by \(\lambda\), and the outer Lebesgue measure by \(\lambda^*\).

A set \(B \subset \mathbb{R}\) is defined as a measurable hull of a set \(A \subset \mathbb{R}\) if \(B \in \mathcal{L}\), \(A \subset B\), and for every \(C \in \mathcal{L}\) such that \(C \subset B \setminus A\), we have \(C \in \mathbb{L}\).

A set \(B \subset \mathbb{R}\) is defined as a measurable kernel of a set \(A \subset \mathbb{R}\) if \(B \in \mathcal{L}\), \(B \subset A\), and for every \(C \in \mathcal{L}\) such that \(C \subset A \setminus B\), we have \(C \in \mathbb{L}\).

\section{Properties of the operator \(\Phi^+\)}

For \(A \in \mathcal{L}\) and \(x \in \mathbb{R}\), we define the upper density \(\bar{D}(A, x)\) and the density \(D(A, x)\) of \(A\) at \(x\) as:
\[
\bar{D}(A,x)=\limsup_{h\to 0^+} \frac{\lambda \left( A\cap [x-h,x +h]\right) }{2h}
\]
\[
D(A,x)=\lim_{h\to 0^+} \frac{\lambda \left( A\cap [x-h,x +h]\right) }{2h}.
\]
Observe that \(\bar{D}(A \cup B, x) \leq \bar{D}(A, x) + \bar{D}(B, x)\) for all \(A, B \in \mathcal{L}\).
Now, let us define the operators:
\[
\Phi^+ (A)=\{ x\in \mathbb{R} : \bar{D}(A,x)> 0 \} 
\] 
\[
\Phi_d (A)=\{ x\in \mathbb{R} : D(A,x) =1 \} .
\] 
It is well-known that the family 
\(
\mathcal{T}_{d} =\left\{ A\in \mathcal{L}: A\subset \Phi_d(A)\right\}
\)
forms a topology on \(\mathbb{R}\), called the density topology (see \cite{hw-w}).

Firstly, we examine the properties of the operator \(\Phi^+\).

\begin{property}\label{prop}
\begin{enumerate}[1.]
 \item $\Phi^+ (\emptyset) = \emptyset,\quad \Phi^+ (\mathbb{R}) = \mathbb{R}$;
 \item $\forall _{A \in \mathcal{L}} \forall _{B \in \mathcal{L}} \quad A\triangle B \in \mathbb{L} \Rightarrow \Phi^+ (A) = \Phi^+(B)$;
 \item $\forall _{A \in \mathcal{L}} \forall _{B \in \mathcal{L}} \quad A\subset B \Rightarrow \Phi^+ (A) \subset \Phi^+ (B)$;
 \item $\forall _{A \in \mathcal{L}} \forall _{B \in \mathcal{L}}\quad \Phi^+ (A \cap B) \subset \Phi^+(A) \cap \Phi^+(B)$;
 \item $\forall _{A \in  \mathcal{L}} \quad \Phi^+ (A)\triangle A \in \mathbb{L}$;
 \item $\forall _{A \in  \mathcal{L}} \quad \Phi^+ (A)\triangle \Phi _{d}(A) \in \mathbb{L}$;
 \item $\forall _{A \in \mathcal{L}} \forall _{B \in \mathcal{L}} \forall _{x \in  \mathbb{R}} \quad  x\in \Phi _{d}(A)  \Rightarrow \bar{D}(A\cap B,x)=\bar{D}(B,x)$;
 \item $\forall _{A \in \mathcal{L}} \forall _{B \in \mathcal{L}}\quad  \Phi _{d}(A) \cap \Phi^+(B) \subset \Phi^+ (A \cap B)$.
\end{enumerate}
\end{property}

\begin{proof}
Conditions 1-4 follow directly from the definition of the operator \(\Phi^+\).

 To prove 5, let \(A\in \mathcal{L}\). Then
\[
A\setminus \Phi^+(A)\subset A\cap \Phi _{d}(\mathbb{R} \setminus A).
\]
By the Lebesgue Density Theorem, we have that 
\((\mathbb{R} \setminus A)\triangle \Phi _{d}(\mathbb{R} \setminus A)\in \mathbb{L}\).
Hence \(A\cap \Phi _{d}(\mathbb{R} \setminus A) \in \mathbb{L}\), and consequently \(A\setminus \Phi^+(A)\in \mathbb{L}\).
Moreover,
\[
 \Phi^+(A) \setminus A=\Phi^+(A) \cap (\mathbb{R} \setminus A)\subset (\mathbb{R} \setminus \Phi _{d}(\mathbb{R}\setminus A)) \cap (\mathbb{R} \setminus A) \in \mathbb{L} .
\]
This implies that \(\Phi^+ (A)\triangle A \in \mathbb{L} \).

For 6 we have
\[
\Phi^+ (A)\triangle \Phi _{d}(A)=(A \triangle \Phi^+ (A)) \triangle( A\triangle \Phi _{d}(A)) 
\] 
for every \(A\in \mathcal{L}\). By 5 and the Lebesgue Density Theorem we obtain that 
\(\Phi^+ (A)\triangle \Phi _{d}(A) \in \mathbb{L}\).

Now we prove 7. Let \(A\), \(B\in \mathcal{L}\) and \(x\in \Phi _{d} (A)\). Then
\[
\bar{D}(B,x) =\bar{D}((A\cap B)\cup (B\setminus A),x)\leq \bar{D}(A\cap B,x) + \bar{D}(B\setminus A,x)=\bar{D}(A\cap B,x) 
\]
Clearly, \(\bar{D}(A\cap B,x) \leq \bar{D}(B,x)\).
Therefore \(\bar{D}(A\cap B,x)=\bar{D}(B,x)\).

For 8, let \(A\), \(B\in \mathcal{L}\). If \(x\in \Phi _{d} (A) \cap \Phi^+(B)\), then by 7 we conclude that \(\bar{D}(A\cap B,x)>0\).
Hence \(x\in \Phi^+(A\cap B)\). This completes the proof.
\end{proof}

\begin{remark}\label{topol}
For \(A = (0, 1)\) and \(B = [0, 1]\), we have that \(\Phi _{d}(A) \cap \Phi^+(B) = (0, 1)\), whereas \(\Phi^+ (A \cap B) = [0, 1]\).
Therefore the reverse inclusion in property 8 does not hold.
\end{remark}

\begin{theorem}
For any set \(A \in \mathcal{L}\), \(\Phi^+(A) \in G_{\sigma \delta}\).
\end{theorem}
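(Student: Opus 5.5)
The plan is to write \(\Phi^+(A)\) explicitly through countable Boolean operations on sets that are open because of continuity in \(x\). Throughout, \(G_{\sigma\delta}\) is read in the paper's notation: the operations are applied to open sets in the order displayed below, and the resulting class is closed under countable intersections. This reading of the symbol is my assumption, and it is the point most likely to need adjustment against the authors' convention.

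Fix \(A\in\mathcal{L}\). For \(h>0\) put \(f_h(x)=\lambda(A\cap[x-h,x+h])\). The first step is to show that \(f_h\) is continuous in \(x\), indeed \(1\)-Lipschitz. This holds because \(|f_h(x)-f_h(y)|\le\lambda([x-h,x+h]\triangle[y-h,y+h])\le 2|x-y|\).

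The second step is to reduce \(h\) to a countable set. For \(0<h'<h\) we have \(f_{h'}(x)\le f_h(x)\le f_{h'}(x)+2(h-h')\). Hence \(f_h/(2h)\) is continuous in \(h\) for each fixed \(x\), and the \(\limsup\) in \(\bar D(A,x)\) may be computed along rational \(h\). Consequently
\[
\Phi^+(A)=\bigcup_{k\in\mathbb{N}}\ \bigcap_{n\in\mathbb{N}}\ \bigcup_{h\in\mathbb{Q}\cap(0,1/n)} U_{h,k},
\qquad
U_{h,k}=\Bigl\{x:\ \frac{f_h(x)}{2h}>\frac1k\Bigr\}.
\]
To verify this, note that \(\bar D(A,x)>0\) holds iff there is \(k\) with \(\bar D(A,x)>1/k\). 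This in turn holds iff for every \(n\) there is \(h<1/n\) with \(f_h(x)/2h>1/k\). The only care needed is strict versus non-strict inequalities. This is handled by passing from \(1/k\) to \(1/(k+1)\) where necessary, and the union over \(k\) absorbs that shift.

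The third step is to read off the class. Each \(U_{h,k}\) is open by the first step, and a countable union of open sets is open. Thus \(V_{n,k}=\bigcup_{h\in\mathbb{Q}\cap(0,1/n)}U_{h,k}\) is open, and \(W_k=\bigcap_n V_{n,k}\) is a countable intersection of open sets. Then \(\Phi^+(A)=\bigcup_k W_k\) is a countable union of these. Finally, \(\Phi^+(A)=\bigcap_{m}\Phi^+(A)\) is a countable intersection of such sets, so \(\Phi^+(A)\) belongs to the class closed under countable intersections, which gives \(\Phi^+(A)\in G_{\sigma\delta}\).

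The main obstacle is not the analysis, which is routine once continuity of \(f_h\) is established. It is ensuring that the displayed representation matches exactly the order of operations the notation \(G_{\sigma\delta}\) denotes in the paper. The key technical points are the continuity of \(f_h\) in both variables and the exchange of \(\limsup_{h\to0^+}\) for a countable intersection of countable unions.
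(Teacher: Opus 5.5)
Up to the display $\Phi^+(A)=\bigcup_k W_k$, your argument is correct and is essentially the paper's proof. The paper writes $\Phi^+(A)=\bigcup_m\bigcap_k\bigcup_{n>k}\{x:\lambda(A\cap[x-\frac1n,x+\frac1n])/\frac2n>\frac1m\}$ with the same open superlevel sets, using $h=1/n$ where you use rational $h$. You are in fact a little more careful than the paper on two points it leaves implicit. First, you prove the superlevel sets are open; your estimate gives Lipschitz constant $2$ rather than the claimed $1$, which is harmless. Second, you justify replacing all $h$ by a countable set.

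The final step, however, proves nothing and should be deleted. Your representation, like the paper's, shows that $\Phi^+(A)$ is a countable union of $G_\delta$ sets, i.e.\ $G_{\delta\sigma}$ in the usual convention. The paper's $G_{\sigma\delta}$ must denote exactly this class, with the operations listed from the outside in, since its proof ends with the same $\bigcup\bigcap\bigcup$ form. So you are finished at $\Phi^+(A)=\bigcup_k W_k$.

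Writing $\Phi^+(A)=\bigcap_m\Phi^+(A)$ only places $\Phi^+(A)$ in $G_{\delta\sigma\delta}$, which is weaker than what you already had. The reading you adopt (the $\bigcup\bigcap\bigcup$ class closed under countable intersections) matches neither the paper's usage nor the standard one.

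Under the standard reading, $G_{\sigma\delta}$ would mean countable intersections of open sets, i.e.\ $G_\delta$. No manipulation can reach that, because it fails in general. Inside each complementary interval $(a,b)$ of the Cantor set $C$, of length $\ell$, place $\bigcup_j(a+s_j,a+(1+\ell)s_j)$ with $s_j=\ell 2^{-j-2}$. The resulting open set $A$ has $\bar{D}(A,a)\ge\ell/(1+\ell)$ at these left endpoints and $D(A,x)=0$ at every other $x\in C$. So $\Phi^+(A)\cap C$ is countable and dense in $C$, hence not $G_\delta$ in $C$ by Baire, and therefore $\Phi^+(A)$ is not $G_\delta$.
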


\begin{proof}
Observe that:
\[
\Phi^+ (A)=\bigcup_{m\in \mathbb{N}}\left\{ x\in \mathbb{R} : \forall_{k \in  \mathbb{N}} \; \exists_{n \in  \mathbb{N} , n>k} \; \frac{\lambda \left( A\cap [x-\frac{1}{n},x +\frac{1}{n}]\right) }{\frac{2}{n}} >\frac{1}{m} \right\} .
\] 
Indeed, if 
\[
\exists_{m\in \mathbb{N}} \; \forall_{k \in  \mathbb{N}} \; \exists_{n \in  \mathbb{N} , n>k} \; \frac{\lambda \left( A\cap [x-\frac{1}{n},x +\frac{1}{n}]\right) }{\frac{2}{n}} >\frac{1}{m}  
\]
then \(\limsup_{h\to 0^+} \frac{\lambda \left( A\cap [x-h,x +h]\right) }{2h} \geq \frac{1}{m}\). Therefore \(x\in \Phi^+ (A)\).

Conversely, assume that 
\[
\forall_{m\in \mathbb{N}} \; \exists_{k \in  \mathbb{N}} \; \forall_{n \in  \mathbb{N} , n>k} \; \frac{\lambda \left( A\cap [x-\frac{1}{n},x +\frac{1}{n}]\right) }{\frac{2}{n}} \leq \frac{1}{m}
\]
Hence \(\lim\limits_{n\to \infty} \frac{\lambda \left( A\cap [x-\frac{1}{n},x +\frac{1}{n}]\right) }{\frac{2}{n}}=0\). This means that \(x\in \Phi _{d}(\mathbb{R} \setminus A)\).
Thus \(\limsup_{h\to 0^+} \frac{\lambda \left( A\cap [x-h,x +h]\right) }{2h}=0\). Therefore \(x\notin \Phi^+ (A)\).

Summarizing the equivalences, we have
\[
\Phi^+ (A)=\bigcup_{m\in \mathbb{N}} \; \bigcap_{k \in  \mathbb{N}} \; \bigcup_{n \in  \mathbb{N} , n>k} \; \left\{ x\in \mathbb{R} : \frac{\lambda \left( A\cap [x-\frac{1}{n},x +\frac{1}{n}]\right) }{\frac{2}{n}} >\frac{1}{m} \right\} .
\] 
Since the set \( \left\{ x\in \mathbb{R} : \frac{\lambda \left( A\cap [x-\frac{1}{n},x +\frac{1}{n}]\right) }{\frac{2}{n}} >\frac{1}{m} \right\}\) is open in the natural topology, we conclude that \(\Phi^+ (A) \in G_{\sigma \delta}\).
\end{proof}

\section{Properties of the family \(\mathcal{T}^+\)}
Let us define the family \(\mathcal{T}^+ =\left\{ A\in \mathcal{L}: A\subset \Phi^+ (A)\right\}\).
Let $A=\bigcup_{n=1}^{\infty} (a_n, b_n) \subset (0,1)$ be right-sided interval set such that $0<\bar{D}(A,0)<1$.
\begin{theorem}\label{minus}
The family \(\mathcal{T}^+\) is a strong generalized topology such that \(\mathcal{T}_{d}\subset \mathcal{T}^+ \subset \mathcal{L}\).
\end{theorem}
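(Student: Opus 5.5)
We verify the axioms of a strong generalized topology one at a time and then the two inclusions. Since \(\emptyset\subset\Phi^+(\emptyset)\) and, by Property~\ref{prop}.1, \(\Phi^+(\mathbb{R})=\mathbb{R}\), both \(\emptyset\) and \(\mathbb{R}\) belong to \(\mathcal{T}^+\). The inclusion \(\mathcal{T}^+\subset\mathcal{L}\) holds by definition. For \(\mathcal{T}_d\subset\mathcal{T}^+\), note that \(D(A,x)=1\) forces \(\bar{D}(A,x)=1>0\), so \(\Phi_d(A)\subset\Phi^+(A)\). Hence \(A\subset\Phi_d(A)\) implies \(A\subset\Phi^+(A)\).

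The real content is closure under arbitrary unions. Let \(\{A_t:t\in T\}\subset\mathcal{T}^+\) and \(U=\bigcup_{t\in T}A_t\). Once we know \(U\in\mathcal{L}\), the inclusion \(U\subset\Phi^+(U)\) follows from monotonicity (Property~\ref{prop}.3): for \(x\in A_t\) we have \(x\in\Phi^+(A_t)\subset\Phi^+(U)\). So the main obstacle is measurability of \(U\) for an uncountable index set.

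To prove measurability I would imitate the classical argument for \(\mathcal{T}_d\). First, a union of countably many members of \(\mathcal{T}^+\) is measurable and again lies in \(\mathcal{T}^+\). Choose a countable subfamily \(\{A_{t_n}\}\) such that \(V=\bigcup_n A_{t_n}\) has the maximal measure among countable subunions, working locally, e.g.\ on bounded intervals, or with a finite measure equivalent to \(\lambda\); this is standard via a supremum argument. Then for every \(t\) we get \(\lambda(A_t\setminus V)=0\), since otherwise adding \(A_t\) would increase the measure. I then claim \(U\setminus V\) is contained in a null set. Fix \(x\in A_t\setminus V\). Because \(A_t\setminus V\) is null, Property~\ref{prop}.2 gives \(\bar{D}(A_t\cap V,x)=\bar{D}(A_t,x)>0\), hence \(x\in\Phi^+(V)\). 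Therefore \(U\setminus V\subset\Phi^+(V)\setminus V\), which is null by Property~\ref{prop}.5. So \(U\) is \(V\) plus a subset of a null set, and by completeness of Lebesgue measure \(U\in\mathcal{L}\).

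The delicate point is the maximality step. I would take \(s=\sup\{\lambda^*\text{-type measure of countable subunions}\}\) with respect to a finite measure, for instance \(\mu(E)=\sum_k 2^{-k}\lambda(E\cap[-k,k])\). I would realize the supremum by a countable union of countable subfamilies, and then argue that \(\mu(A_t\setminus V)=0\) for every \(t\), which is equivalent to \(\lambda(A_t\setminus V)=0\). This step is routine. The key use of the specific operator is Property~\ref{prop}.5, namely that \(\Phi^+(V)\setminus V\) is null, which replaces the Lebesgue density theorem in the classical proof.
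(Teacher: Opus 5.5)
Your proof is correct, and it differs from the paper's only in how you obtain a measurable set that captures the union up to a null set.

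\textbf{The paper's route.} It takes a measurable kernel $B$ of $U=\bigcup_{t\in T}A_t$. Since each $A_t$ is measurable, $A_t\setminus B$ is a measurable subset of $U\setminus B$, hence null. This gives $\Phi^+(A_t)=\Phi^+(A_t\cap B)\subset\Phi^+(B)$ and the sandwich $B\subset U\subset\Phi^+(B)$. Property~\ref{prop}.5 and completeness then finish.

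\textbf{Your route.} You build a countable subunion $V$ of maximal $\mu$-measure, for a finite measure $\mu$ equivalent to $\lambda$. Maximality gives $\lambda(A_t\setminus V)=0$ for every $t$, and you then run the identical argument with $V$ in place of $B$.

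\textbf{Comparison.} The paper's version is shorter because it delegates the supremum argument to the existence of measurable kernels, which it invokes without proof. That existence result is itself proved by essentially the exhaustion you carry out. Your version is self-contained, using only countable additivity of a finite measure equivalent to $\lambda$. It also gives a slightly sharper conclusion: every union of $\mathcal{T}^+$-open sets agrees, up to a null set, with a countable subunion.

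Your treatment of $\emptyset,\mathbb{R}\in\mathcal{T}^+$ and of $\mathcal{T}_d\subset\mathcal{T}^+\subset\mathcal{L}$ matches the paper.
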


\begin{proof}
By Property \ref{prop} we have \(\emptyset, \mathbb{R}\in \mathcal{T}^+\).
Let \(\{ A_t\}_{t\in T}\subset \mathcal{T}^+\). 
Since the pair \((\mathcal{S}_{\mu} , \mathcal{I}_{\mu})\) has the hull property, there exists a measurable kernel \(B\) of the set \(\bigcup_{t\in T}A_t\).
Then \((B\cap A_t)\triangle A_t \in \mathbb{L}\) for every \(t\in T\), and 
\[
B\subset \bigcup_{t\in T}A_t \subset \bigcup_{t\in T}\Phi^+(A_t)=\bigcup_{t\in T}\Phi^+(B\cap A_t) \subset \Phi^+(B) .
\]
By condition 5 of Property \ref{prop}, \(\Phi^+ (B)\setminus B \in \mathbb{L}\). Hence \(\bigcup_{t\in T }A_t\in \mathcal{L}\). 
By the monotonicity of \(\Phi^+\), we obtain \(\bigcup_{t\in T}A_t\subset \bigcup_{t\in T}\Phi^+ (A_t) \subset \Phi^+(\bigcup_{t\in T}A_t)\).
Clearly \(\mathcal{T}_{d}\subset \mathcal{T}^+\).
\end{proof}

\begin{remark}
The family \(\mathcal{T}^+\) is not a topology because it is not closed under finite intersections. 
For example, \((0,1] \in \mathcal{T}^+\) and \([1,2] \in \mathcal{T}^+\), but their intersection 
\[
(0,1]\cap [1,2]=\{ 1\} \notin \mathcal{T}^+.
\]
\end{remark}

\begin{remark}\label{rem3}
By Property  \ref{prop}.8, if \(A\in \mathcal{T}^+\) and \(B\in \mathcal{T}_{d}\), then \(A\cap B\in \mathcal{T}^+\).
\end{remark}

\begin{property}\label{dwa}
For every \(A\in \mathcal{L}\), the following conditions hold:
\begin{enumerate}[1.]
 \item \( \Phi^+ ( \Phi^+ (A))= \Phi^+ (A)\);
 \item \(\Phi^+ (A \cap \Phi^+ (A))= \Phi^+ (A)\);
 \item \((A \cap \Phi^+ (A)) \in \mathbb{L} \iff A\in \mathbb{L}\).
\end{enumerate}
\end{property}

\begin{proof}
Conditions 1 and 2 follow from conditions 2 and 5 of Property \ref{prop}.
It remains to prove 3.
Since \((A \cap \Phi^+ (A)) \triangle A \in \mathbb{L}\), we have \(A\in \mathbb{L}\).
The inverse implication is obvious.
\end{proof}

In light of this last result, we obtain the following property.

\begin{property}\label{topol}
For every \(A\in \mathcal{L}\), the sets \(\Phi^+ (A)\) and \(A \cap \Phi^+ (A)\) are \(\mathcal{T}^+\)-open.
\end{property}

\begin{theorem}\label{wnet}
For every $ A\subset \mathbb{R}$
$$
int_{\mathcal{T}^+} (A)=A\cap \Phi^+(B),
$$
where $B$ is measurable kernel of $A$.
\end{theorem}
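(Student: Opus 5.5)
The plan is to show two things: first, that the set $A\cap\Phi^+(B)$ is itself a $\mathcal{T}^+$-open subset of $A$; second, that every $\mathcal{T}^+$-open subset of $A$ is contained in it. By Theorem \ref{minus}, $\mathcal{T}^+$ is closed under arbitrary unions, so $int_{\mathcal{T}^+}(A)$ is the largest $\mathcal{T}^+$-open subset of $A$. The two facts together then give the equality.

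\emph{Step 1 (measurability).} We have to check that $A\cap\Phi^+(B)\in\mathcal{L}$, since $A$ itself need not be measurable. Since $B\subset A$, we can write
\[
A\cap\Phi^+(B)=\bigl(B\cap\Phi^+(B)\bigr)\cup\bigl((A\setminus B)\cap\Phi^+(B)\bigr).
\]
The second piece is contained in $\Phi^+(B)\setminus B$, which is a null set by condition 5 of Property \ref{prop}. Lebesgue measure is complete, so this piece is measurable and null. The first piece is measurable. Hence $A\cap\Phi^+(B)\in\mathcal{L}$. Moreover its symmetric difference with $B$ is contained in $(\Phi^+(B)\setminus B)\cup(B\setminus\Phi^+(B))$, which is null by condition 5 again.

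\emph{Step 1 (openness).} Condition 2 of Property \ref{prop} then gives $\Phi^+(A\cap\Phi^+(B))=\Phi^+(B)$. This set contains $A\cap\Phi^+(B)$, so $A\cap\Phi^+(B)\in\mathcal{T}^+$. Clearly it is contained in $A$, so $A\cap\Phi^+(B)\subset int_{\mathcal{T}^+}(A)$.

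\emph{Step 2.} Let $G\in\mathcal{T}^+$ with $G\subset A$. Then $G\setminus B$ is a measurable subset of $A\setminus B$. By the definition of a measurable kernel, $G\setminus B\in\mathbb{L}$, so $G\triangle(G\cap B)\in\mathbb{L}$. Using condition 2 and then condition 3 of Property \ref{prop}, we get
\[
G\subset\Phi^+(G)=\Phi^+(G\cap B)\subset\Phi^+(B).
\]
Hence $G\subset A\cap\Phi^+(B)$. Applying this to $G=int_{\mathcal{T}^+}(A)$, which is $\mathcal{T}^+$-open by Theorem \ref{minus}, yields the reverse inclusion.

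The only delicate point is the measurability in Step 1 for nonmeasurable $A$. It is resolved by the completeness argument above, together with the fact that $\Phi^+(B)\setminus B$ is null. Note also that measurable kernels are unique up to null sets, so by condition 2 of Property \ref{prop} the right-hand side does not depend on the choice of $B$.
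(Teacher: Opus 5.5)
Your proof is correct. Your Step 2, the inclusion $\operatorname{int}_{\mathcal{T}^+}(A)\subset A\cap\Phi^+(B)$, is essentially the paper's argument: any $\mathcal{T}^+$-open $G\subset A$ has $G\setminus B\in\mathbb{L}$, so $G\subset\Phi^+(G)=\Phi^+(G\cap B)\subset\Phi^+(B)$.

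The other inclusion is where you take a different route.

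\emph{The paper's route.} It works pointwise. Given $x\in A\cap\Phi^+(B)$, it takes $V=(B\cup\{x\})\cap\Phi^+(B\cup\{x\})$. This set is measurable simply because $B\cup\{x\}$ is. It is $\mathcal{T}^+$-open by the general fact that $E\cap\Phi^+(E)\in\mathcal{T}^+$ for $E\in\mathcal{L}$. It also satisfies $x\in V\subset A$. So the paper never has to check measurability of a set built from the possibly nonmeasurable $A$. Whatever measurability is needed for the interior itself to be $\mathcal{T}^+$-open is supplied by the union-closedness of $\mathcal{T}^+$ (Theorem~\ref{minus}). That proof uses essentially your sandwich-plus-completeness argument, with the kernel of $\bigcup_t A_t$ squeezed between $B$ and $\Phi^+(B)$.

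\emph{Your route.} You show at once that $A\cap\Phi^+(B)$ is measurable and open. You sandwich it between $B\cap\Phi^+(B)$ and $\Phi^+(B)$, which differ by a null set, and invoke completeness. This makes that argument explicit and gives extra information that the pointwise argument does not display: $\operatorname{int}_{\mathcal{T}^+}(A)$ is a measurable set with $\operatorname{int}_{\mathcal{T}^+}(A)\triangle B\in\mathbb{L}$ and $\Phi^+(\operatorname{int}_{\mathcal{T}^+}(A))=\Phi^+(B)$.

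Your closing remark that the right-hand side does not depend on the choice of measurable kernel is correct, and the paper leaves it implicit.
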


\begin{proof}
Let \(A \subset \mathbb{R}\) and \(B\) be a measurable kernel of \(A\). We will show that \(\operatorname{int}_{\mathcal{T}^+} (A)\subset A\cap \Phi^+(B)\).
Fix \(x \in \operatorname{int}_{\mathcal{T}^+} (A)\). Then there exists \(V\in \mathcal{T}^+\) such that \(x\in V\subset A\). Hence \(x\in \Phi^+ (V)\).
By the definition of a measurable kernel, \(V\setminus B\in \mathbb{L}\). Consequently,
\[
\Phi^+ (V)= \Phi^+ ((V\cap B)\cup (V\setminus B))= \Phi^+ (V\cap B) \subset \Phi^+ (B).
\]
This implies that \(x\in A\cap \Phi^+(B)\).

Now assume that \(x\in A\cap \Phi^+(B)\). Since \(B\cup \{ x\} \subset A\) and \(\Phi^+ (B\cup \{ x\}) =\Phi^+ (B)\),
it follows that \(x\in \Phi^+ (B\cup \{ x\})\).
Let \(V= (B\cup \{ x\})\cap \Phi^+ (B\cup \{ x\})\). Then \(V\in \mathcal{T}^+\) and \(x\in V\subset A\).
Therefore \(x \in \operatorname{int}_{\mathcal{T}^+} (A)\).
\end{proof}

\begin{corollary}
Let \(A\in \mathcal{L}\). Then
\[
\operatorname{int}_{\mathcal{T}^+} (A)=\emptyset \iff A\in \mathbb{L} .
\]
\end{corollary}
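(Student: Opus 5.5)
The plan is to read off the corollary from Theorem \ref{wnet}. For \(A\in\mathcal{L}\), the set \(A\) is its own measurable kernel. Indeed, \(A\in\mathcal{L}\) and \(A\subset A\), and the only measurable subset of \(A\setminus A=\emptyset\) is \(\emptyset\in\mathbb{L}\). Taking \(B=A\) in Theorem \ref{wnet} therefore gives
\[
\operatorname{int}_{\mathcal{T}^+}(A)=A\cap\Phi^+(A).
\]
The equivalence then reduces to showing that \(A\cap\Phi^+(A)=\emptyset\) if and only if \(A\in\mathbb{L}\).

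For the implication from \(A\in\mathbb{L}\), I would note that if \(\lambda(A)=0\), then \(\lambda(A\cap[x-h,x+h])=0\) for every \(x\) and \(h\). Hence \(\bar{D}(A,x)=0\) for all \(x\), so \(\Phi^+(A)=\emptyset\). Alternatively, one can use Property \ref{prop}.2 together with Property \ref{prop}.1, comparing \(A\) with \(\emptyset\). In either case \(A\cap\Phi^+(A)=\emptyset\), so the interior is empty.

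For the converse, suppose \(A\cap\Phi^+(A)=\emptyset\). Then in particular \(A\cap\Phi^+(A)\in\mathbb{L}\), and Property \ref{dwa}.3 gives \(A\in\mathbb{L}\) directly. Equivalently, Property \ref{prop}.5 tells us that \(A\setminus\Phi^+(A)\in\mathbb{L}\), and here \(A\setminus\Phi^+(A)=A\).

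I do not expect a real obstacle. The only point needing care is the justification that a measurable set serves as its own measurable kernel, so that Theorem \ref{wnet} applies with \(B=A\). This is immediate from the definition given in the introduction.
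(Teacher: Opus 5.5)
Your proof is correct and is the paper's argument: take $B=A$ as its own measurable kernel in Theorem \ref{wnet} to get $\operatorname{int}_{\mathcal{T}^+}(A)=A\cap\Phi^+(A)$, then apply Property \ref{dwa}.3. You also spell out two points the paper leaves implicit: the easy direction ($\Phi^+(A)=\emptyset$ for null $A$), and why a measurable set is its own kernel.
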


\begin{proof} 
It suffices to demonstrate that \(\operatorname{int}_{\mathcal{T}^+} (A)=\emptyset\) implies \(A\in \mathbb{L}\).
By Theorem \ref{wnet} we have \(\operatorname{int}_{\mathcal{T}^+} (A)=A\cap \Phi^+(A)\).
By Property \ref{dwa} we conclude that \(A\in \mathbb{L}\).
\end{proof}

The above property does not hold for an arbitrary set \(A\subset \mathbb{R}\).
If \(A\) is a Bernstein set (see \cite{hw-o}), then \(\operatorname{int}_{\mathcal{T}^+} (A)=\emptyset\), but \(A\notin \mathbb{L}\).

The next theorem provides a characterization of the regular open sets in the space \(\langle \mathbb{R}, \mathcal{T}^+ \rangle\). A set \(A\in \mathcal{T}^+\) is \(\mathcal{T}^+\)-regular open if \(A=\operatorname{int}_{ \mathcal{T}^+} ( \operatorname{cl}_{ \mathcal{T}^+} (A))\).

\begin{theorem}
A set \(A\in \mathcal{T}^+\) is \(\mathcal{T}^+\)-regular open if and only if \(A= \operatorname{cl}_{ \mathcal{T}^+} (A) \cap \Phi^+ (A)\).
\end{theorem}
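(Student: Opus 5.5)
The plan is to reduce the statement to a computation of \(\operatorname{int}_{\mathcal{T}^+}(\operatorname{cl}_{\mathcal{T}^+}(A))\) via Theorem \ref{wnet}. Put \(C=\operatorname{cl}_{\mathcal{T}^+}(A)\). Then the statement follows once we prove the identity
\[
\operatorname{int}_{\mathcal{T}^+}(C)=C\cap\Phi^+(A)\qquad\text{for every } A\in\mathcal{T}^+ ,
\]
since then \(A\) is regular open exactly when \(A=C\cap\Phi^+(A)\).

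First we show \(C\in\mathcal{L}\) and \(C\setminus A\in\mathbb{L}\). Since \(\mathcal{T}^+\) is closed under arbitrary unions (Theorem \ref{minus}), \(C\) is \(\mathcal{T}^+\)-closed, so \(\mathbb{R}\setminus C\in\mathcal{T}^+\subset\mathcal{L}\). Hence \(C\) is measurable. Next let \(x\in C\setminus \Phi^+(A)\). Then \(\bar D(A,x)=0\), so \(x\in\Phi_d(\mathbb{R}\setminus A)\). Consider the set \(U=(\mathbb{R}\setminus A)\cap\Phi_d(\mathbb{R}\setminus A)\). By the Lebesgue density theorem \(U\in\mathcal{T}_d\subset\mathcal{T}^+\); moreover \(x\in U\) if \(x\notin A\), and \(U\cap A=\emptyset\). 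This would contradict \(x\in C\) whenever \(x\notin A\). Therefore
\[
C\subset A\cup\Phi^+(A),
\]
and so \(C\setminus A\subset\Phi^+(A)\setminus A\in\mathbb{L}\) by Property \ref{prop}.5.

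Now apply Theorem \ref{wnet} to the measurable set \(C\), whose measurable kernel can be taken to be \(C\) itself. This gives \(\operatorname{int}_{\mathcal{T}^+}(C)=C\cap\Phi^+(C)\). Since \(C\triangle A=C\setminus A\in\mathbb{L}\), Property \ref{prop}.2 yields \(\Phi^+(C)=\Phi^+(A)\). Hence \(\operatorname{int}_{\mathcal{T}^+}(C)=C\cap\Phi^+(A)\), which is the identity we needed.

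I expect the main obstacle to be the step \(C\setminus A\in\mathbb{L}\). It relies on exhibiting a \(\mathcal{T}^+\)-open neighbourhood of a point \(x\notin A\) with \(\bar D(A,x)=0\) that misses \(A\). The density-open set \(U\) does this, but we must check that \(x\in U\), that is, \(x\notin A\) and \(D(\mathbb{R}\setminus A,x)=1\); both hold precisely because \(\bar D(A,x)=0\).
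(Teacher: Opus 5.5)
Your proof is correct and follows essentially the paper's route: you reduce via Theorem~\ref{wnet} to \(\operatorname{int}_{\mathcal{T}^+}(\operatorname{cl}_{\mathcal{T}^+}(A))=\operatorname{cl}_{\mathcal{T}^+}(A)\cap\Phi^+(\operatorname{cl}_{\mathcal{T}^+}(A))\), show \(\operatorname{cl}_{\mathcal{T}^+}(A)\triangle A\in\mathbb{L}\), and conclude \(\Phi^+(\operatorname{cl}_{\mathcal{T}^+}(A))=\Phi^+(A)\) by Property~\ref{prop}.2. The only difference is how you get \(\operatorname{cl}_{\mathcal{T}^+}(A)\setminus A\in\mathbb{L}\): you use a direct neighbourhood argument with the density-open set \((\mathbb{R}\setminus A)\cap\Phi_d(\mathbb{R}\setminus A)\), whereas the paper computes the closure exactly as \(A\cup(\mathbb{R}\setminus\Phi^+(\mathbb{R}\setminus A))\) from Theorem~\ref{wnet}; you also verify explicitly that the closure is measurable, which the paper uses tacitly.
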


\begin{proof} 
The set \(A\in \mathcal{T}^+\) is \(\mathcal{T}^+\)-regular open if and only if
\[
A=\operatorname{int}_{ \mathcal{T}^+} ( \operatorname{cl}_{ \mathcal{T}^+} (A)) =\operatorname{cl}_{ \mathcal{T}^+} (A) \cap \Phi^+ (\operatorname{cl}_{ \mathcal{T}^+} (A) )
\]
Since \((\mathbb{R} \setminus A)\triangle \Phi^+(\mathbb{R} \setminus A) \in \mathbb{L}\), we have \(A\triangle (\mathbb{R} \setminus \Phi^+(\mathbb{R} \setminus A)) \in \mathbb{L}\).
Moreover, 
\[
\operatorname{cl}_{ \mathcal{T}^+} (A) =\mathbb{R} \setminus (\operatorname{int}_{ \mathcal{T}^+} (\mathbb{R} \setminus A))=\mathbb{R} \setminus ( (\mathbb{R} \setminus A) \cap \Phi^+(\mathbb{R} \setminus A))=A \cup (\mathbb{R} \setminus \Phi^+(\mathbb{R} \setminus A) ) .
\]
Thus \(A\triangle \operatorname{cl}_{ \mathcal{T}^+} (A)  \in \mathbb{L}\), and consequently, \(\Phi^+ (\operatorname{cl}_{ \mathcal{T}^+} (A) )=\Phi^+ (A)\).
\end{proof}

It is worth noting that in the space \(\langle \mathbb{R}, \mathcal{T}_{d} \rangle\), regular open sets are characterized by the equality \(A=\Phi _{d}(A)\) (see \cite{hw-w}).

\begin{corollary}
If \(A\in \mathcal{L}\) and \(A=\Phi^+ (A)\), then \(A\) is \(\mathcal{T}^+\)-regular open.
\end{corollary}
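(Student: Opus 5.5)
The plan is to deduce the corollary from the characterization of $\mathcal{T}^+$-regular open sets in the preceding theorem: a set $A\in\mathcal{T}^+$ is $\mathcal{T}^+$-regular open if and only if $A=\operatorname{cl}_{\mathcal{T}^+}(A)\cap\Phi^+(A)$. First we check that $A\in\mathcal{T}^+$. Since $A\in\mathcal{L}$ and $A=\Phi^+(A)$, we have in particular $A\subset\Phi^+(A)$, so $A\in\mathcal{T}^+$ by definition, and the theorem applies.

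Next we verify the equality. Substituting $\Phi^+(A)=A$, the right-hand side becomes $\operatorname{cl}_{\mathcal{T}^+}(A)\cap A$. Because $A\subset\operatorname{cl}_{\mathcal{T}^+}(A)$, this intersection equals $A$.

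The inclusion $A\subset\operatorname{cl}_{\mathcal{T}^+}(A)$ is the only point that needs a word of care, since $\mathcal{T}^+$ is merely a generalized topology. It still holds, because closed sets are complements of $\mathcal{T}^+$-open sets and the closure is the intersection of the closed supersets of $A$. It can also be read off the formula obtained in the proof of the theorem,
\[
\operatorname{cl}_{\mathcal{T}^+}(A)=A\cup(\mathbb{R}\setminus\Phi^+(\mathbb{R}\setminus A)).
\]
With this, the characterizing condition is satisfied and $A$ is $\mathcal{T}^+$-regular open. No step presents a genuine obstacle; the whole argument is a direct application of the previous theorem.
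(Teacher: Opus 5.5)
Your proposal is correct and follows essentially the same route as the paper: $A\subset\Phi^+(A)$ gives $A\in\mathcal{T}^+$, and substituting $\Phi^+(A)=A$ into the preceding characterization yields $\operatorname{cl}_{\mathcal{T}^+}(A)\cap\Phi^+(A)=\operatorname{cl}_{\mathcal{T}^+}(A)\cap A=A$. Your extra justification of $A\subset\operatorname{cl}_{\mathcal{T}^+}(A)$ in a generalized topology is a harmless detail that the paper leaves implicit.
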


\begin{proof}
Indeed, clearly \(A \in \mathcal{T}^+\) and
\[
\operatorname{cl}_{ \mathcal{T}^+} (A) \cap \Phi^+ (A)= \operatorname{cl}_{ \mathcal{T}^+} (A) \cap A=A ,
\]
thus \(A\) is regular open.
\end{proof}

Note that the converse of the above corollary does not hold.
For \(A=[0,1)\), we have \(A=\operatorname{int}_{ \mathcal{T}^+} ( \operatorname{cl}_{ \mathcal{T}^+} (A))\), but \(A\neq \Phi^+ (A)=[0,1]\).

Now we will characterize the closure in the space $\langle \mathbb{R}, \mathcal{T}^+ \rangle$.

\begin{theorem}
For every $ A\subset \mathbb{R}$
$$
cl_{\mathcal{T}^+} (A)=A\cup \left\{ x\in \mathbb{R} :\lim_{h\to 0^+} \frac{\lambda^* \left( A\cap [x-h,x +h]\right) }{2h}=1\right\} .
$$
\end{theorem}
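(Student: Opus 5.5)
We argue by complementation, using Theorem \ref{wnet} for the interior of an arbitrary set. Let $C=\mathbb{R}\setminus A$ and let $B$ be a measurable kernel of $C$. Then
\[
\operatorname{cl}_{\mathcal{T}^+}(A)=\mathbb{R}\setminus \operatorname{int}_{\mathcal{T}^+}(C)=\mathbb{R}\setminus (C\cap \Phi^+(B)) = A\cup (\mathbb{R}\setminus \Phi^+(B)).
\]
So it suffices to show
\[
\mathbb{R}\setminus \Phi^+(B)=\left\{ x : \lim_{h\to 0^+}\frac{\lambda^*(A\cap[x-h,x+h])}{2h}=1\right\},
\]
that is, $\bar D(B,x)=0$ if and only if the outer density of $A$ at $x$ equals $1$. (We only need this up to points of $A$, but it should hold everywhere.)

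The key step is to relate the kernel $B$ of $C$ to a hull of $A$. I claim $H=\mathbb{R}\setminus B$ is a measurable hull of $A$. It is measurable and contains $A$. Moreover $H\setminus A=C\setminus B$, and every measurable subset of $C\setminus B$ is null by the definition of a kernel. For a measurable hull we have $\lambda^*(A\cap I)=\lambda(H\cap I)$ for every interval $I$. This is the standard fact: $A\cap I\subset H\cap I$ gives one inequality. For the other, take a measurable $E\supset A\cap I$ with $\lambda(E)=\lambda^*(A\cap I)$. Then $(H\cap I)\setminus E$ is a measurable subset of $H\setminus A$, hence null.

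Consequently, for every $x$ and $h>0$,
\[
\frac{\lambda^*(A\cap[x-h,x+h])}{2h}=\frac{\lambda(H\cap[x-h,x+h])}{2h}=1-\frac{\lambda(B\cap[x-h,x+h])}{2h}.
\]
Hence the outer density of $A$ at $x$ exists and equals $1$ exactly when $\lim_{h\to0^+}\lambda(B\cap[x-h,x+h])/2h=0$. Since these ratios are nonnegative, that limit is $0$ if and only if the $\limsup$ is $0$, i.e.\ $\bar D(B,x)=0$, i.e.\ $x\notin\Phi^+(B)$. This gives the displayed equality and completes the argument.

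The main obstacle is the hull/kernel identity $\lambda^*(A\cap I)=\lambda(H\cap I)$, in particular checking that the complement of a kernel of $C$ is a hull of $A$. The existence of kernels is taken as in the proof of Theorem \ref{minus}. Everything else is bookkeeping with complements and Theorem \ref{wnet}.
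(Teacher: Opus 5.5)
Your proof is correct and follows essentially the same route as the paper. Both arguments take complements, apply Theorem~\ref{wnet} to $\mathbb{R}\setminus A$, and identify $\mathbb{R}\setminus\Phi^+(B)$ with the set where the outer density of $A$ equals $1$ (the paper writes this set as $\Phi_d(\mathbb{R}\setminus B)$). The only difference is that you justify replacing $\lambda((\mathbb{R}\setminus B)\cap[x-h,x+h])$ by $\lambda^*(A\cap[x-h,x+h])$ by checking that $\mathbb{R}\setminus B$ is a measurable hull of $A$, a step the paper's final equality leaves implicit.
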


\begin{proof} 
$$
cl_{\mathcal{T}^+} (A)=\mathbb{R} \setminus int_{\mathcal{T}^+} (\mathbb{R} \setminus A) =A\cup (\mathbb{R}\setminus \Phi^+ (B)),
$$
where $B$ is measurable kernel of $\mathbb{R}\setminus A$.
Since $\mathbb{R}\setminus \Phi^+ (B)=\Phi _{d}(\mathbb{R}\setminus B)$, we have 
\begin{align*}
cl_{\mathcal{T}^+} (A)=A\cup \Phi _{d}(\mathbb{R}\setminus B)&=A\cup \left\{ x\in \mathbb{R} :\lim_{h\to 0^+} \frac{\lambda \left( (\mathbb{R}\setminus B)\cap [x-h,x +h]\right) }{2h}=1\right\}  \\
&=A\cup \left\{ x\in \mathbb{R} :\lim_{h\to 0^+} \frac{\lambda^* \left( A\cap [x-h,x +h]\right) }{2h}=1\right\} .
\end{align*}
\end{proof}

\begin{corollary}\label{uwaga}
If $A\in \mathcal{L}$, then 
\begin{enumerate}[1.]
 \item $cl_{\mathcal{T}^+} (A)=A\cup \Phi _{d} (A)$;
 \item $cl_{\mathcal{T}^+} (A)=A \Leftrightarrow \Phi _{d} (A)\subset A$; 
 \item a regular open set $A$ in $\langle \mathbb{R}, \mathcal{T}_{d} \rangle$ is a closed set in  $\langle \mathbb{R}, \mathcal{T}^+ \rangle$.
\end{enumerate}
\end{corollary}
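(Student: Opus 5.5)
For item 1 we specialize the closure theorem just proved. If \(A\in\mathcal{L}\), then \(A\) is its own measurable kernel. Equivalently, \(\mathbb{R}\setminus A\) is measurable and is its own kernel, so in the proof of the closure theorem we may take \(B=\mathbb{R}\setminus A\). This gives
\[
cl_{\mathcal{T}^+}(A)=A\cup \Phi_d(\mathbb{R}\setminus B)=A\cup\Phi_d(A).
\]
Alternatively, for measurable \(A\) we have \(\lambda^*(A\cap[x-h,x+h])=\lambda(A\cap[x-h,x+h])\), so the set appearing in the closure theorem is exactly \(\Phi_d(A)\).

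Item 2 follows immediately from item 1. We have \(A\cup\Phi_d(A)=A\) if and only if \(\Phi_d(A)\subset A\).

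For item 3, let \(A\) be regular open in \(\langle\mathbb{R},\mathcal{T}_d\rangle\). By the characterization cited from \cite{hw-w}, this means \(A=\Phi_d(A)\). In particular \(A\in\mathcal{T}_d\subset\mathcal{L}\), so item 2 applies. Since \(\Phi_d(A)=A\subset A\), we get \(cl_{\mathcal{T}^+}(A)=A\), i.e. \(A\) is \(\mathcal{T}^+\)-closed.

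The only point needing care is the identity \(\mathbb{R}\setminus\Phi^+(B)=\Phi_d(\mathbb{R}\setminus B)\) used in the closure theorem, which I take as already established. Beyond that, one must check that the measurable kernel of a measurable set may be taken to be the set itself. This is immediate from the definition, since \(A\setminus A=\emptyset\in\mathbb{L}\). No real obstacle is expected.
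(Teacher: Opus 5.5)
Your proposal is correct and is essentially the paper's argument: the paper gives no separate proof, treating the corollary as an immediate specialization of the closure theorem to measurable $A$ (where $\lambda^*=\lambda$, or equivalently $\mathbb{R}\setminus A$ is its own measurable kernel). Items 2 and 3 then follow exactly as you describe, using $A=\Phi_d(A)$ for $\mathcal{T}_d$-regular open sets and the fact that $cl_{\mathcal{T}^+}(A)$ is always $\mathcal{T}^+$-closed.
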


In a generalized topological space \(\langle X, \Gamma \rangle\), we consider nowhere dense sets and strongly nowhere dense sets. A set \(A\) is nowhere dense if every set \(W\in \Gamma \setminus \{\emptyset\}\) contains a set \(V\in \Gamma \setminus \{\emptyset\}\) such that \(V\cap A =\emptyset\). In contrast, a strongly nowhere dense set \(A\) in \(\langle X, \Gamma \rangle\) satisfies the condition \(\operatorname{int}_{\mathcal{T}^+} (\operatorname{cl}_{\mathcal{T}^+} (A))=\emptyset\).
Every strongly nowhere dense set is nowhere dense, but the converse does not hold (see \cite{hw-klp}).

Sets that are countable unions of nowhere dense sets or strongly nowhere dense sets are called first category sets or strongly first category sets, respectively.
The following theorem characterizes these families.

\begin{theorem}
For every $ A\subset \mathbb{R}$ the following conditions are equivalent:
\begin{enumerate}[i)]
 \item $\forall _{W\in \mathcal{T}^+ \setminus \{\emptyset\}} \; \exists _{V\in \mathcal{T}^+ \setminus \{\emptyset\}} \; V\subset W \wedge V\cap A =\emptyset$;
 \item $int_{\mathcal{T}^+} (cl_{\mathcal{T}^+} (A))=\emptyset$;
 \item $A\in \mathbb{L}$.
\end{enumerate}
\end{theorem}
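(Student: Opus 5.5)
We prove the cycle (iii)$\Rightarrow$(ii)$\Rightarrow$(i)$\Rightarrow$(iii). For an arbitrary $A\subset\mathbb{R}$ the closure formula just proved is the main tool, and for (i)$\Rightarrow$(iii) we argue via outer measure, since $A$ need not be measurable.

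\emph{(iii)$\Rightarrow$(ii).} Let $A\in\mathbb{L}$. Then $A\in\mathcal{L}$, so by Corollary \ref{uwaga}.1 we have $\operatorname{cl}_{\mathcal{T}^+}(A)=A\cup\Phi_d(A)$. Since $\lambda(A)=0$, every point has density $0$ with respect to $A$, so $\Phi_d(A)=\emptyset$. Hence $\operatorname{cl}_{\mathcal{T}^+}(A)=A\in\mathbb{L}$. The Corollary following Theorem \ref{wnet} then gives $\operatorname{int}_{\mathcal{T}^+}(A)=\emptyset$.

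\emph{(ii)$\Rightarrow$(i).} This is the general fact that strongly nowhere dense sets are nowhere dense, and the proof is short. Let $W\in\mathcal{T}^+\setminus\{\emptyset\}$ and put $V=W\setminus \operatorname{cl}_{\mathcal{T}^+}(A)$. Then $V$ is $\mathcal{T}^+$-open, because for a closed $F$ and open $W$ each point of $W\setminus F$ has $\mathcal{T}^+$-open neighbourhoods $W$ and $U\subset\mathbb{R}\setminus F$. However, generalized topologies need not be closed under intersections, so this needs care. In our setting it holds: the closed set $F=\operatorname{cl}_{\mathcal{T}^+}(A)$ has complement $(\mathbb{R}\setminus A)\cap\Phi^+(B)$, and the complement contains the density-open set $\Phi_d(B)\cap B'$ up to null sets. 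Rather than fight this, we argue directly. If $V=\emptyset$, then $W\subset \operatorname{cl}_{\mathcal{T}^+}(A)$, so $W\subset\operatorname{int}_{\mathcal{T}^+}(\operatorname{cl}_{\mathcal{T}^+}(A))=\emptyset$, a contradiction. So it suffices to find a nonempty open subset of $W\setminus A$.

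Here we use (iii), which we can obtain first from (ii). From (ii), $\operatorname{int}_{\mathcal{T}^+}(A)=\emptyset$, and $\operatorname{cl}_{\mathcal{T}^+}(A)\supset\{x: \lim \lambda^*(A\cap[x-h,x+h])/2h=1\}$. This set is $\Phi_d(H)$ for a measurable hull $H$ of $A$, and it has measure $\lambda^*(A)$. By Property \ref{dwa}.3, the measurable set $\Phi_d(H)$ has empty interior $\Phi_d(H)\cap\Phi^+(\Phi_d(H))$ only if it is null. So $\lambda^*(A)=0$, i.e.\ (ii)$\Rightarrow$(iii). Now, with $A$ null, set $V=W\setminus A$. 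This set is measurable with $V\triangle W$ null, so by Property \ref{prop}.2 we get $V\subset W\subset\Phi^+(W)=\Phi^+(V)$. Thus $V\in\mathcal{T}^+$, and $V\neq\emptyset$ since $\lambda(W)>0$ by the Corollary following Theorem \ref{wnet}. This gives (iii)$\Rightarrow$(i) directly.

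\emph{(i)$\Rightarrow$(iii).} This is the main step. Suppose $\lambda^*(A)>0$ and let $H$ be a measurable hull of $A$. Take $W=H\cap\Phi^+(H)$, which is $\mathcal{T}^+$-open by Property \ref{topol} and nonempty by Property \ref{dwa}.3. Let $V\subset W$ be nonempty open with $V\cap A=\emptyset$. Then $V\subset H\setminus A$ and $V$ is measurable, so by the hull property $V\in\mathbb{L}$. But a nonempty $\mathcal{T}^+$-open set is non-null by the Corollary following Theorem \ref{wnet}, a contradiction. So (i), (ii), (iii) are equivalent via (ii)$\Rightarrow$(iii)$\Rightarrow$(ii) and (i)$\Leftrightarrow$(iii). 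The delicate point is the identification of $\operatorname{cl}_{\mathcal{T}^+}(A)$ with a set containing $\Phi_d(H)$ for non-measurable $A$, which relies on the $\lambda^*$-closure formula.
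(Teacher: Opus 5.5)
Your proof is correct, but it is organized differently from the paper's. The paper runs the cycle i)$\Rightarrow$ii)$\Rightarrow$iii)$\Rightarrow$i).

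\begin{itemize}
\item[] \emph{i)$\Rightarrow$ii).} This step uses no measure theory. If $W\neq\emptyset$ is $\mathcal{T}^+$-open and $W\subset\operatorname{cl}_{\mathcal{T}^+}(A)$, then every nonempty open $V\subset W$ meets $A$, because each point of $V$ lies in the closure.
\item[] \emph{ii)$\Rightarrow$iii).} The set $\operatorname{cl}_{\mathcal{T}^+}(A)$ is the complement of a $\mathcal{T}^+$-open set, hence measurable. It contains $A$, so it is non-null when $\lambda^*(A)>0$. Then $\operatorname{cl}_{\mathcal{T}^+}(A)\cap\Phi^+(\operatorname{cl}_{\mathcal{T}^+}(A))$ is a nonempty open subset of the closure.
\item[] \emph{iii)$\Rightarrow$i).} This is your $V=W\setminus A$.
\end{itemize}

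You instead prove (i)$\Leftrightarrow$(iii) and (ii)$\Leftrightarrow$(iii) separately, using a measurable hull $H$ twice. Once you apply the hull property to $V\subset H\setminus A$. Once you use $\Phi_d(H)\subset\operatorname{cl}_{\mathcal{T}^+}(A)$ together with the $\lambda^*$-closure formula. You get (iii)$\Rightarrow$(ii) from $\operatorname{cl}_{\mathcal{T}^+}(A)=A\cup\Phi_d(A)=A$ for null $A$. All of these steps are sound.

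The paper's route is shorter. The ``delicate point'' you flag disappears once you note that $\mathcal{T}^+$-closed sets are measurable. Then $\operatorname{cl}_{\mathcal{T}^+}(A)$ itself can replace $\Phi_d(H)$, and neither a hull nor the $\lambda^*$-formula is needed. In exchange, your route gives (i)$\Rightarrow$(iii) without touching closures, and it yields the by-product that null sets are $\mathcal{T}^+$-closed.

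Two remarks in your (ii)$\Rightarrow$(i) paragraph are false and should be deleted, although your final argument never uses them.

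\begin{itemize}
\item[] \emph{Direction of the general fact.} The implication valid in every generalized topology is (i)$\Rightarrow$(ii), by the argument above, not (ii)$\Rightarrow$(i). The sentence in the paper that you echo has the direction reversed relative to its own definitions. For a counterexample, take $X=\{a,b,c\}$ with $\Gamma=\{\emptyset,\{a,b\},\{b,c\},X\}$ and $A=\{a\}$. Then $\operatorname{cl}_\Gamma(A)=\{a\}$ has empty interior, yet no nonempty open subset of $\{a,b\}$ misses $A$.
\item[] \emph{Openness of $W\setminus F$.} For $W$ open and $F$ closed, $W\setminus F$ need not be $\mathcal{T}^+$-open, even in this setting. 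Take $W=(0,1]$ and $F=(-\infty,1)\cup(2,\infty)$, which is closed because $[1,2]\in\mathcal{T}^+$. Then $W\setminus F=\{1\}$. Under (ii) the set $W\setminus\operatorname{cl}_{\mathcal{T}^+}(A)$ is indeed open, but only a posteriori: $A$ is then null and equals its closure.
\end{itemize}

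So that paragraph should simply read: (ii)$\Rightarrow$(iii), then (iii)$\Rightarrow$(i).
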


\begin{proof} 
$i)\Rightarrow ii)$ Suppose, on the contrary, that condition $ii)$ is not satisfied. 
Hence  $int_{\mathcal{T}^+} (cl_{\mathcal{T}^+} (A))\neq \emptyset$.
Let $W\in \mathcal{T}^+ \setminus \{\emptyset\}$ with $W \subset cl_{\mathcal{T}^+} (A)$.
Then there exists a set $V\subset W$, $V\in \mathcal{T}^+ \setminus \{\emptyset\}$ and $V\cap A =\emptyset$,
which contradicts the fact that $V\cap A \neq \emptyset$ since $V\subset cl_{\mathcal{T}^+} (A)$.

$ii)\Rightarrow iii)$ Suppose that $A\notin \mathbb{L}$.
Thus $cl_{\mathcal{T}^+} (A)\in \mathcal{L} \setminus \mathbb{L}$ and $int_{\mathcal{T}^+} (cl_{\mathcal{T}^+} (A))=\emptyset$.
Putting $V=cl_{\mathcal{T}^+} (A) \cap \Phi^+ (cl_{\mathcal{T}^+} (A))$ we obtain $V\in  \mathcal{T}^+\setminus \{\emptyset\}$ and $V\subset cl_{\mathcal{T}^+} (A)$. This contradicts $int_{\mathcal{T}^+} (cl_{\mathcal{T}^+} (A))=\emptyset$.

$iii)\Rightarrow i)$ If $A\in \mathbb{L}$ and $W\in \mathcal{T}^+ \setminus \{\emptyset\}$, then
$V=W\setminus A \in \mathcal{T}^+ \setminus \{\emptyset\}$ and $V\cap A =\emptyset$.
\end{proof}

\begin{corollary}
The families of nowhere dense sets, strongly nowhere dense sets, first category sets, and strongly first category sets all coincide with the family \(\mathbb{L}\).
\end{corollary}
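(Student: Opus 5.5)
The Corollary follows almost immediately from the preceding Theorem, so the plan is to read off the four families from the equivalences i)$\Leftrightarrow$ii)$\Leftrightarrow$iii).

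\emph{Nowhere dense and strongly nowhere dense sets.} Condition i) is exactly the definition of a nowhere dense set in $\langle \mathbb{R}, \mathcal{T}^+\rangle$. Condition ii) is exactly the definition of a strongly nowhere dense set. Since both are equivalent to iii), each of these two families equals $\mathbb{L}$.

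\emph{First category sets.} Here I only need that $\mathbb{L}$ is closed under countable unions and contains every nowhere dense set.
\begin{enumerate}[1.]
 \item If $A=\bigcup_{n\in\mathbb{N}} A_n$ with each $A_n$ nowhere dense (or strongly nowhere dense), then each $A_n\in\mathbb{L}$ by the first part.
 \item Hence $A\in\mathbb{L}$, since $\mathbb{L}$ is a $\sigma$-ideal.
 \item Conversely, every $A\in\mathbb{L}$ is itself strongly nowhere dense, so it is trivially a strongly first category set. It is therefore also a first category set, because strongly nowhere dense sets are nowhere dense (or directly, since i) holds for $A$).
\end{enumerate}
Thus both the first category family and the strongly first category family coincide with $\mathbb{L}$.

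No step is a real obstacle. The only point to check is that the Theorem is stated for arbitrary $A\subset\mathbb{R}$, not just measurable $A$, so no measurability assumption on the $A_n$ is needed. This holds because condition iii) itself forces $A_n$ to be a null set, and hence measurable.
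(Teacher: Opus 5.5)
Your proposal is correct and is essentially the argument the paper intends, since it states the Corollary without proof as an immediate consequence of the preceding Theorem. You read off the two nowhere dense families from the equivalence i)$\Leftrightarrow$ii)$\Leftrightarrow$iii), and obtain the two first category families from the fact that $\mathbb{L}$ is a $\sigma$-ideal and every null set is (strongly) nowhere dense.
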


\begin{theorem}
A set \(A\subset \mathbb{R}\) is \(\mathcal{T}^+\)-closed and \(\mathcal{T}^+\)-discrete if and only if \(A\in \mathbb{L}\).
\end{theorem}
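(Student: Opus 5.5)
The plan is to prove the two implications separately, using the characterization of $\mathcal{T}^+$-closure proved above and the fact, from the last theorem, that every null set is $\mathcal{T}^+$-nowhere dense.

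For the implication from $A\in\mathbb{L}$, first check that $A$ is $\mathcal{T}^+$-closed. For every $x$ and $h>0$ we have $\lambda^*(A\cap[x-h,x+h])=0$, so the limit in the closure formula is $0$ and never $1$. Hence $\operatorname{cl}_{\mathcal{T}^+}(A)=A$. Alternatively, $\mathbb{R}\setminus A$ is measurable and $\Phi^+(\mathbb{R}\setminus A)=\Phi^+(\mathbb{R})=\mathbb{R}$ by Property \ref{prop}.2, so $\mathbb{R}\setminus A\in\mathcal{T}^+$. Next check that $A$ is $\mathcal{T}^+$-discrete, meaning every point $x\in A$ has a $\mathcal{T}^+$-open set $U$ with $U\cap A=\{x\}$. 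Take $U=(\mathbb{R}\setminus A)\cup\{x\}$. It is measurable, and $\Phi^+(U)=\mathbb{R}$ again by Property \ref{prop}.2, since $U$ differs from $\mathbb{R}$ by a null set. So $U\subset\Phi^+(U)$, hence $U\in\mathcal{T}^+$, and clearly $U\cap A=\{x\}$. If discreteness is meant as "every subset of $A$ is relatively open," this gives the same conclusion, because arbitrary unions of such $U\cap A$ are relatively open.

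For the converse, assume $A$ is $\mathcal{T}^+$-closed and $\mathcal{T}^+$-discrete. Closedness gives $\mathbb{R}\setminus A\in\mathcal{T}^+\subset\mathcal{L}$, so $A\in\mathcal{L}$. Suppose $\lambda(A)>0$. Discreteness gives, for each $x\in A$, a set $U_x\in\mathcal{T}^+$ with $U_x\cap A=\{x\}$. Since $x\in U_x\subset\Phi^+(U_x)$, we get $\bar D(U_x,x)>0$. By Property \ref{prop}.5, however, $\Phi^+(A)\setminus\Phi_d(A)$ is null, and $A\setminus\Phi_d(A)$ is null, so we can pick $x\in A\cap\Phi_d(A)$. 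At such a point Property \ref{prop}.8 gives $x\in\Phi_d(A)\cap\Phi^+(U_x)\subset\Phi^+(A\cap U_x)=\Phi^+(\{x\})=\emptyset$. This is a contradiction, so $A\in\mathbb{L}$.

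The main obstacle is pinning down the intended meaning of "discrete" in a generalized topology. The argument above covers both the pointwise-isolated reading and the reading that the relative generalized topology on $A$ is the full power set. Beyond that, the only care needed is to invoke Property \ref{prop}.8 at a density point of $A$ to rule out positive measure.
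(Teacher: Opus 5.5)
Your proof is correct. The forward direction is essentially the paper's. The paper observes that every subset $B$ of a null set $A$ satisfies $\mathbb{R}\setminus B\in\mathcal{T}^+$, so every subset is $\mathcal{T}^+$-closed. Your open sets $(\mathbb{R}\setminus A)\cup\{x\}$ are that observation applied to $B=A\setminus\{x\}$.

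The converse follows a genuinely different route. The paper argues by contradiction. If $\lambda(A)>0$, then $A$ contains a non-measurable subset $B$. Discreteness, together with closedness of $A$, makes $B$ $\mathcal{T}^+$-closed, so $\mathbb{R}\setminus B\in\mathcal{T}^+\subset\mathcal{L}$, which is impossible. This is very short, but it rests on the existence of non-measurable subsets of every set of positive measure, a choice-dependent fact.

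Your argument is local and purely measure-theoretic. At a Lebesgue density point $x$ of $A$, Property \ref{prop}.8 gives $\bar D(A\cap U,x)=\bar D(U,x)>0$ for every $U\in\mathcal{T}^+$ containing $x$, so no such $U$ can meet $A$ only in $\{x\}$. This approach has three advantages:
\begin{enumerate}[1.]
 \item It needs no non-measurable sets.
 \item It uses closedness of $A$ only to obtain $A\in\mathcal{L}$.
 \item It proves more: no density point of a measurable set is $\mathcal{T}^+$-isolated in it, so every measurable $\mathcal{T}^+$-discrete set is null, closed or not.
\end{enumerate}
It also covers the paper's reading of discreteness, since $A\setminus\{x\}$ being $\mathcal{T}^+$-closed yields $U_x=\mathbb{R}\setminus(A\setminus\{x\})$.

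One small slip: the nullity of $\Phi^+(A)\setminus\Phi_d(A)$ is Property \ref{prop}.6, not Property \ref{prop}.5. In any case you only need $A\setminus\Phi_d(A)\in\mathbb{L}$, which is the Lebesgue Density Theorem.
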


\begin{proof} 
If \(A\in \mathbb{L}\), then for every \(B\subset A\), we have \(B\in \mathbb{L}\). Hence \(\mathbb{R} \setminus B \in \mathcal{T}^+\).
This implies that \(B\) is \(\mathcal{T}^+\)-closed. Therefore \(A\) is \(\mathcal{T}^+\)-discrete and \(\mathcal{T}^+\)-closed.

Conversely, suppose that a set \(A\) is \(\mathcal{T}^+\)-closed and \(\mathcal{T}^+\)-discrete but \(A\notin \mathbb{L}\).
Then \(A \in \mathcal{L} \setminus \mathbb{L}\). Hence there exists \(B\subset A\) such that \(B\notin \mathcal{L}\).
Since \(A\) is a \(\mathcal{T}^+\)-discrete set, we obtain that \(B\) is \(\mathcal{T}^+\)-closed, which implies \(B\in \mathcal{L}\). 
This contradiction completes the proof.
\end{proof}

The following theorems can be formulated and proved analogously to the corresponding one in \cite{hw-hl}.

\begin{theorem}
A set \( A\subset \mathbb{R}\) is \(\mathcal{T}^+\)-compact if and only if it is finite.
\end{theorem}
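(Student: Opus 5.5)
Finite sets are trivially $\mathcal{T}^+$-compact: from any cover by $\mathcal{T}^+$-open sets, pick one member containing each of the finitely many points. The content is therefore the converse. Assume $A$ is infinite and build a $\mathcal{T}^+$-open cover of $A$ with no finite subcover. I would split into two cases according to whether $A$ is Lebesgue null.

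\emph{Case $A\in\mathbb{L}$.} Every subset $C$ of $A$ is null, so by the previous theorem on closed discrete sets (or directly, since a set differing from $\mathbb{R}$ by a null set is measurable and equals its density points a.e.) the sets $\mathbb{R}\setminus C$ are $\mathcal{T}^+$-open. Choose an infinite countable subset $\{a_1,a_2,\dots\}\subset A$. For each $x\in A$ put
\[
U_x=\mathbb{R}\setminus\{a_n: a_n\neq x\}.
\]
Each $U_x$ is $\mathcal{T}^+$-open, since it is the complement of a countable set and hence $U_x\subset \Phi^+(U_x)=\mathbb{R}$. Each $U_x$ contains $x$. Each $U_x$ contains at most one $a_n$, namely $x$ itself. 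So finitely many $U_x$ cover only finitely many of the $a_n$, and there is no finite subcover.

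\emph{Case $A\notin\mathbb{L}$, i.e.\ $\lambda^*(A)>0$.} Here I would reduce to the previous case by a splitting argument. Take a countable infinite set $E=\{e_n\}\subset A$. Let $U_0=\mathbb{R}\setminus E$; it is $\mathcal{T}^+$-open because $\Phi^+(\mathbb{R}\setminus E)=\mathbb{R}$ by Property~\ref{prop}.2. For each $n$ let $U_n=\mathbb{R}\setminus(E\setminus\{e_n\})$, which is $\mathcal{T}^+$-open for the same reason. The family $\{U_0\}\cup\{U_n:n\in\mathbb{N}\}$ covers $A$: points outside $E$ lie in $U_0$, and $e_n\in U_n$. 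Any finite subfamily contains only finitely many $e_n$, so it is not a subcover. In fact this argument needs only that $A$ contains a countably infinite subset and that countable sets are null, so the two cases merge into one uniform construction.

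The only step requiring care is verifying that $\mathbb{R}\setminus N$ belongs to $\mathcal{T}^+$ for a countable (null) set $N$. This is immediate: $\mathbb{R}\setminus N\in\mathcal{L}$, and by Property~\ref{prop}.1--2 we have $\Phi^+(\mathbb{R}\setminus N)=\Phi^+(\mathbb{R})=\mathbb{R}\supset\mathbb{R}\setminus N$. So I expect no real obstacle. The argument mirrors the standard one for the density topology in \cite{hw-hl}, using the cover $\{U_0,U_1,U_2,\dots\}$ described above.
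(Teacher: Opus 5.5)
Your proof is correct and is essentially the argument the paper intends: the paper gives no proof of its own and defers to the density-topology case. There, an infinite set is shown to be non-compact using the cover by complements of all but one point of a countably infinite subset. Your sets $\mathbb{R}\setminus E$ and $\mathbb{R}\setminus(E\setminus\{e_n\})$ are even $\mathcal{T}_{d}$-open, so that argument carries over unchanged. As you noted yourself, the case split on whether $A\in\mathbb{L}$ is unnecessary, since only a countably infinite subset of distinct points $e_n\in A$ is used.
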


\begin{theorem}
The space \(\langle \mathbb{R}, \mathcal{T}^+ \rangle\) is neither first countable, nor second countable, nor separable.
\end{theorem}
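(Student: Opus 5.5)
We prove the three negative claims separately, in the same way as for the density topology in \cite{hw-hl}. The basic tool is the one used in the characterization of nowhere dense sets. If $W\in\mathcal{T}^+\setminus\{\emptyset\}$, then $W\notin\mathbb{L}$, because every point $x\in W$ satisfies $\bar D(W,x)>0$. Moreover, for any $N\in\mathbb{L}$ with $x\notin N$, the set $W\setminus N$ is still $\mathcal{T}^+$-open and still contains $x$, since $\Phi^+(W\setminus N)=\Phi^+(W)$ by Property \ref{prop}.2.

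\textbf{Not separable.} Let $C\subset\mathbb{R}$ be countable. Then $C\in\mathbb{L}$, so $\mathbb{R}\setminus C\in\mathcal{T}^+$ by Property \ref{prop}.1 and 2. This set is nonempty and disjoint from $C$. Hence no countable set is $\mathcal{T}^+$-dense.

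\textbf{Not first countable.} We argue at an arbitrary point $x$ and suppose $\{U_n\}_{n\in\mathbb{N}}$ is a countable base of $\mathcal{T}^+$-neighbourhoods at $x$. Each $U_n$ is open and contains $x$, so $\bar D(U_n,x)>0$. We then build a null set $N$ with $x\notin N$ such that $U_n\cap N\neq\emptyset$ for every $n$. It suffices to pick one point $y_n\in U_n\setminus\{x\}$ for each $n$, which is possible since $U_n\notin\mathbb{L}$, and to put $N=\{y_n:n\in\mathbb{N}\}$, a countable set. The set $V=\mathbb{R}\setminus N$ is $\mathcal{T}^+$-open and contains $x$, yet no $U_n$ is contained in $V$. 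This contradicts the choice of the base.

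\textbf{Not second countable.} This follows immediately, since a second countable generalized space is first countable; a countable base yields a countable local base at each point. Alternatively, it follows from non-separability, by choosing one point from each nonempty member of a countable base.

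The only step needing care is that elements of a local base are nonempty and non-null, and that removing a countable set from $\mathbb{R}$ preserves $\mathcal{T}^+$-openness. Both follow directly from Property \ref{prop}.2 and the definition of $\mathcal{T}^+$, so I expect no real obstacle. The main thing to check is that the definitions of first countability and of a base in generalized topological spaces, taken from \cite{hw-hl}, match those used here.
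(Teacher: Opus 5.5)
Your proof is correct and is essentially the argument the paper has in mind. The paper gives no details and only says the result is proved analogously to \cite{hw-hl}. For the density topology that argument has the same three steps as yours: complements of countable (null) sets are open, so no countable set is dense; choosing one point other than $x$ from each member of a putative countable local base and removing them gives a contradiction; and second countability implies first countability.
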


Let \(\sigma (\mathcal{T}^+)\) denote the \(\sigma\)-algebra generated by the families \(\mathcal{T}^+\) and \(\mathbb{L}\).

\begin{theorem}
\(\sigma (\mathcal{T}^+)=\mathcal{L}\).
\end{theorem}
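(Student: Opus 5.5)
We prove the two inclusions \(\sigma (\mathcal{T}^+)\subset \mathcal{L}\) and \(\mathcal{L}\subset \sigma (\mathcal{T}^+)\) separately.

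For the first inclusion, Theorem \ref{minus} gives \(\mathcal{T}^+\subset \mathcal{L}\). Trivially \(\mathbb{L}\subset \mathcal{L}\). Since \(\mathcal{L}\) is a \(\sigma\)-algebra containing both generating families, it contains the \(\sigma\)-algebra they generate, so \(\sigma (\mathcal{T}^+)\subset \mathcal{L}\).

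For the reverse inclusion, fix \(A\in \mathcal{L}\) and split it as
\[
A=(A\cap \Phi^+(A))\cup (A\setminus \Phi^+(A)).
\]
By Property \ref{topol}, the set \(A\cap \Phi^+(A)\) is \(\mathcal{T}^+\)-open, so it belongs to \(\sigma (\mathcal{T}^+)\). By condition 5 of Property \ref{prop}, \(A\triangle \Phi^+(A)\in \mathbb{L}\). In particular \(A\setminus \Phi^+(A)\in \mathbb{L}\), which lies in \(\sigma (\mathcal{T}^+)\) by definition of the generating family. A union of two members of a \(\sigma\)-algebra is again a member, so \(A\in \sigma (\mathcal{T}^+)\).

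Alternatively, one could avoid Property \ref{topol} and use \(\mathcal{T}_{d}\subset \mathcal{T}^+\) instead. Here \(\Phi_d(A)\cap A\) is \(\mathcal{T}_d\)-open, since \(\Phi_d(A\cap \Phi_d(A))=\Phi_d(A)\), and its difference from \(A\) is null by the Lebesgue Density Theorem.

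Neither route has a genuine obstacle. The only point needing care is that \(A\cap \Phi^+(A)\) is itself measurable and satisfies \(A\cap \Phi^+(A)\subset \Phi^+(A\cap \Phi^+(A))\). This is exactly condition 2 of Property \ref{dwa}, and it is what Property \ref{topol} records.
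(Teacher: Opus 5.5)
Your proof is correct and follows the paper's argument essentially verbatim: the inclusion \(\sigma(\mathcal{T}^+)\subset\mathcal{L}\) is immediate, and the reverse uses the decomposition \(A=(A\cap\Phi^+(A))\cup(A\setminus\Phi^+(A))\) into a \(\mathcal{T}^+\)-open set and a null set (by condition 5 of Property \ref{prop}). Your alternative route through \(A\cap\Phi_d(A)\in\mathcal{T}_d\subset\mathcal{T}^+\) is also valid.
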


\begin{proof} 
Clearly, \(\sigma (\mathcal{T}^+)\subset \mathcal{L}\). We will show the reverse inclusion.
Let \(A\in \mathcal{L}\). Then
\[
A=(A\cap \Phi^+(A) )\cup (A\setminus \Phi^+(A) ).
\]
Since \(A\cap \Phi^+(A)\in \mathcal{T}^+\) by Property \ref{topol} and \(A\setminus \Phi^+(A)\in \mathbb{L}\) by condition 5 of Property \ref{prop}, we obtain \(A\in \sigma (\mathcal{T}^+)\).
\end{proof}

Now we determine the form of the smallest topology \(\tau(\mathcal{T}^+)\) generated by the family \(\mathcal{T}^+\).

\begin{theorem}
\(\tau (\mathcal{T}^+)=2^{\mathbb{R}}\).
\end{theorem}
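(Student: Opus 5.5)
It suffices to show that every singleton \(\{x\}\) belongs to \(\tau(\mathcal{T}^+)\). Then every subset of \(\mathbb{R}\) is a union of singletons, hence open in \(\tau(\mathcal{T}^+)\). The inclusion \(\tau(\mathcal{T}^+)\subset 2^{\mathbb{R}}\) is trivial. Since \(\tau(\mathcal{T}^+)\) is closed under finite intersections, the plan is to write \(\{x\}\) as the intersection of two \(\mathcal{T}^+\)-open sets.

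Fix \(x\in\mathbb{R}\) and put \(U=(x-1,x]\) and \(V=[x,x+1)\). First, \(U\in\mathcal{L}\). Every point \(y\in(x-1,x)\) is a density point of \(U\), so \(\bar D(U,y)=1>0\). At \(x\) itself,
\[
\frac{\lambda\left(U\cap[x-h,x+h]\right)}{2h}=\frac{h}{2h}=\frac12 \quad\text{for } 0<h<1,
\]
so \(\bar D(U,x)=\frac12>0\). Hence \(U\subset\Phi^+(U)\), that is, \(U\in\mathcal{T}^+\). By the symmetric argument, \(V\in\mathcal{T}^+\). This is the same mechanism as in the Remark showing \((0,1]\cap[1,2]=\{1\}\).

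Therefore \(\{x\}=U\cap V\in\tau(\mathcal{T}^+)\), and \(\tau(\mathcal{T}^+)\) is the discrete topology \(2^{\mathbb{R}}\). There is no real obstacle here. The only point needing care is the one-sided density computation at the endpoint \(x\), which is explicit.
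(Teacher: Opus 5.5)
Your proof is correct and is essentially the paper's argument: the paper also writes $\{x\}=(a,x]\cap[x,b)$ with both half-closed intervals $\mathcal{T}^+$-open, and concludes that $\tau(\mathcal{T}^+)$ is discrete. The only difference is that you explicitly check $\bar{D}((x-1,x],x)=\tfrac12>0$, which the paper leaves implicit.
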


\begin{proof} 
Let \(x\in \mathbb{R}\) and \(a<x<b\). Then \((a,x]\in \tau (\mathcal{T}^+)\) and \([x,b)\in \tau (\mathcal{T}^+)\).
Hence
\[
\{ x\} =(a,x]\cap [x,b)\in \tau (\mathcal{T}^+),
\]
so \(\tau (\mathcal{T}^+)=2^{\mathbb{R}}\).
\end{proof}

As stated in the theorem in \cite{hw-kt}, for a strong generalized topology \(\langle X, \Gamma \rangle\), the family
\[
\mathcal{T}_{\Gamma}=\{ A\in \Gamma : \forall _{B \in \Gamma} A\cap B \in \Gamma\}
\]
is a topology.
In the case where \(\Gamma = \mathcal{T}^+\), we obtain the following theorem.

\begin{theorem}\label{topologie}
\(\mathcal{T}_{\Gamma}=\mathcal{T}_{d}\).
\end{theorem}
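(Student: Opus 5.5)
The plan is to prove the two inclusions separately. The inclusion \(\mathcal{T}_{d}\subset \mathcal{T}_{\Gamma}\) should be immediate. By Theorem \ref{minus} we have \(\mathcal{T}_{d}\subset\mathcal{T}^+\). If \(A\in\mathcal{T}_{d}\) and \(B\in\mathcal{T}^+\), then Remark \ref{rem3}, which rests on Property \ref{prop}.8, gives \(A\cap B\in\mathcal{T}^+\). So every \(A\in\mathcal{T}_{d}\) lies in \(\mathcal{T}_{\Gamma}\).

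For the reverse inclusion I argue by contraposition. Let \(A\in\mathcal{T}^+\setminus\mathcal{T}_{d}\); I will construct \(B\in\mathcal{T}^+\) with \(A\cap B\notin\mathcal{T}^+\). Since \(A\notin\mathcal{T}_{d}\) and \(A\in\mathcal{L}\), there is a point \(x\in A\) with \(x\notin\Phi_d(A)\). For every \(h>0\) we have
\[
\frac{\lambda(A\cap[x-h,x+h])}{2h}+\frac{\lambda((\mathbb{R}\setminus A)\cap[x-h,x+h])}{2h}=1 .
\]
Hence \(x\notin\Phi_d(A)\) means exactly that \(\liminf_{h\to0^+}\lambda(A\cap[x-h,x+h])/2h<1\). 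Equivalently \(\bar D(\mathbb{R}\setminus A,x)>0\), that is, \(x\in\Phi^+(\mathbb{R}\setminus A)\). This translation from "not a density point" to "positive upper density of the complement" is the one step where some care is needed, and it is precisely where the lack of closure under intersections originates.

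Next define
\[
B=\bigl((\mathbb{R}\setminus A)\cap\Phi^+(\mathbb{R}\setminus A)\bigr)\cup\{x\}.
\]
The set \(B\) is measurable. By condition 2 of Property \ref{prop}, \(\Phi^+(B)=\Phi^+(\mathbb{R}\setminus A)\), because \(B\) differs from \(\mathbb{R}\setminus A\) only by a null set, using condition 5 of Property \ref{prop}. Every point of \((\mathbb{R}\setminus A)\cap\Phi^+(\mathbb{R}\setminus A)\) therefore lies in \(\Phi^+(B)\). The point \(x\) also lies in \(\Phi^+(B)\) by the previous paragraph, so \(B\in\mathcal{T}^+\).

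Finally, \(A\cap B=\{x\}\), since \(x\in A\) and the rest of \(B\) is contained in \(\mathbb{R}\setminus A\). But \(\{x\}\notin\mathcal{T}^+\), because \(\Phi^+(\{x\})=\emptyset\). Thus \(A\notin\mathcal{T}_{\Gamma}\), which gives \(\mathcal{T}_{\Gamma}\subset\mathcal{T}_{d}\) and completes the argument.
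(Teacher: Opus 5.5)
Your proof is correct and takes the same route as the paper. The first inclusion goes through Property 1.8 (Remark 3). For the second, you pick $x\in A\setminus\Phi_d(A)$, observe that $x\in\Phi^+(\mathbb{R}\setminus A)$, and intersect $A$ with a $\mathcal{T}^+$-open set built from the complement of $A$. The only cosmetic difference is the test set: the paper intersects $A$ with $\Phi^+(\mathbb{R}\setminus A)$ itself and argues that the intersection is a nonempty null set, whereas your set $\bigl((\mathbb{R}\setminus A)\cap\Phi^+(\mathbb{R}\setminus A)\bigr)\cup\{x\}$ makes the intersection exactly $\{x\}$.
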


\begin{proof} 
Let \(A\in \mathcal{T}_{d}\) and \(B\in \mathcal{T}^+\).
Then \(A\cap B \in \mathcal{L}\), with \(A\subset \Phi _{d}(A)\) and \(B\subset \Phi^+(B)\).
Hence \(A\cap B \subset \Phi _{d} (A) \cap \Phi^+(B)\).
By condition 8 of Property \ref{prop}, it follows that \( \Phi _{d}(A) \cap \Phi^+(B) \subset \Phi^+ (A \cap B)\).
Thus \(A\cap B \subset \Phi^+ (A \cap B)\), which implies \(A\cap B\in\Gamma\).
Therefore \(A\in \mathcal{T}_{\Gamma}\).

Now let \(A\in \mathcal{T}_{\Gamma}\). Suppose that \(A\notin \mathcal{T}_{d}\).
Hence \(A\setminus \Phi _{d}(A) \neq \emptyset\). Fix \(x\in A\setminus \Phi _{d}(A)\).
Then \(x\in \Phi^+(\mathbb{R} \setminus A)\). Thus \(x\in A\cap \Phi^+(\mathbb{R} \setminus A)\).
Since \(A\in \mathcal{T}_{\Gamma}\) and \(\Phi^+(\mathbb{R} \setminus A) \in \mathcal{T}^+\), it follows that \(A\cap \Phi^+(\mathbb{R} \setminus A)  \in \mathcal{T}^+\).
Simultaneously \( A\triangle \Phi^+(\mathbb{R} \setminus A) \in \mathbb{L}\) and \(x\in A\cap \Phi^+(\mathbb{R} \setminus A)\), so \(A\cap \Phi^+(\mathbb{R} \setminus A) \notin \mathcal{T}^+\).
Therefore \(A\cap \Phi^+(\mathbb{R} \setminus A) =\emptyset\), which contradicts \(x\in A\cap \Phi^+(\mathbb{R} \setminus A)\).
It means \(A\in \mathcal{T}_{d}\).
\end{proof}

Following the concept of approximative continuity, we introduce the notions of \(\mathcal{T}^+\)-approximative continuity and \(\mathcal{T}_{d}^+\)-approximative continuity. Recall that a function \(f: \mathbb{R} \to \mathbb{R}\) is approximatively continuous at \(x_0\) if there exists a set \(A\in \mathcal{L}\) such that \(x_0 \in \Phi _{d}(A)\) and \(f|_{A\cup \{ x_0\} }\) is continuous at \(x_0\) with respect to the natural topology.

We say that \(f: \mathbb{R} \to \mathbb{R}\) is \(\mathcal{T}^+\)-\textbf{approximatively continuous} at \(x_0\) if there exists a set \(A\in \mathcal{L}\) such that \(x_0 \in \Phi^+(A)\) and \(f|_{A\cup \{ x_0\} }\) is continuous at \(x_0\) with respect to the natural topology.

We say that \(f: \mathbb{R} \to \mathbb{R}\) is \(\mathcal{T}_{d}^+\)-\textbf{approximatively continuous} at \(x_0\) if there exists a set \(A\in \mathcal{L}\) such that \(x_0 \in \Phi^+(A)\) and \(f|_{A\cup \{ x_0\} }\) is continuous at \(x_0\) with respect to the topology \(\mathcal{T}_{d}\).

We also define \(\mathcal{T}^+\)-continuity of a function \(f: \mathbb{R} \to \mathbb{R}\) as follows: for every \(U\in \mathcal{T}_{nat}\) such that \(f(x_0)\in U\), there exists \(V\in \mathcal{T}^+\) such that \(x_0\in V\) and \(f(V)\subset U\).

Let \(f: \mathbb{R} \to \mathbb{R}\) and \(x_0 \in \mathbb{R}\). Consider the following statements:
\begin{enumerate}[i)]
 \item \(f\) is approximatively continuous at \(x_0\).
 \item \(f\) is \(\mathcal{T}^+\)-approximatively continuous at \(x_0\).
 \item \(f\) is \(\mathcal{T}_{d}^+\)-approximatively continuous at \(x_0\).
 \item \(f\) is \(\mathcal{T}^+\)-continuous at \(x_0\).
\end{enumerate}

\begin{theorem}\label{ciaglosci} 
The following implications hold: \(i)\Rightarrow ii) \Rightarrow iii)\Rightarrow iv)\).
\end{theorem}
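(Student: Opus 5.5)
We prove the three implications one at a time, each time reusing the witnessing set from the hypothesis, shrunk if necessary, as the witness for the conclusion.

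\emph{i) $\Rightarrow$ ii).} Let $A\in\mathcal{L}$ satisfy $x_0\in\Phi_d(A)$, with $f|_{A\cup\{x_0\}}$ continuous at $x_0$. Since $D(A,x_0)=1>0$, we have $\bar D(A,x_0)>0$, so $x_0\in\Phi^+(A)$. The same set $A$ therefore witnesses $\mathcal{T}^+$-approximative continuity, and nothing more is needed.

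\emph{ii) $\Rightarrow$ iii).} Take $A\in\mathcal{L}$ with $x_0\in\Phi^+(A)$ and $f|_{A\cup\{x_0\}}$ continuous at $x_0$ in the natural topology. The natural topology is contained in $\mathcal{T}_d$, so its trace on $A\cup\{x_0\}$ is contained in the trace of $\mathcal{T}_d$. Continuity at $x_0$ for the finer topology on the domain follows: the natural neighbourhood of $x_0$ in $A\cup\{x_0\}$ mapped into a given neighbourhood of $f(x_0)$ is also a $\mathcal{T}_d$-neighbourhood. Again the same $A$ works.

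\emph{iii) $\Rightarrow$ iv).} This is the substantive step. Let $A\in\mathcal{L}$ with $x_0\in\Phi^+(A)$ and $f|_{A\cup\{x_0\}}$ continuous at $x_0$ with respect to $\mathcal{T}_d$. Fix $U\in\mathcal{T}_{nat}$ containing $f(x_0)$. By $\mathcal{T}_d$-continuity there is $W\in\mathcal{T}_d$ with $x_0\in W$ and $f(W\cap(A\cup\{x_0\}))\subset U$. The natural candidate is $V_0=(W\cap A)\cup\{x_0\}$, but we must ensure the candidate is $\mathcal{T}^+$-open and still has positive upper density at $x_0$.
\begin{enumerate}[1.]
\item Since $W\in\mathcal{T}_d$, we have $x_0\in\Phi_d(W)$. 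Property \ref{prop}.8 gives $x_0\in\Phi_d(W)\cap\Phi^+(A)\subset\Phi^+(W\cap A)$.
\item Set $C=W\cap A\in\mathcal{L}$ and $V=(C\cap\Phi^+(C))\cup\{x_0\}$. By Property \ref{topol}, $C\cap\Phi^+(C)$ is $\mathcal{T}^+$-open. Adding $x_0$ preserves this, because $\Phi^+(C\cup\{x_0\})=\Phi^+(C)\ni x_0$ by Property \ref{prop}.2.
\item Finally, $V\subset W\cap(A\cup\{x_0\})$, so $f(V)\subset U$.
\end{enumerate}

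The only delicate point is the choice of $V$ in iii) $\Rightarrow$ iv). Intersecting with the $\mathcal{T}_d$-neighbourhood must not destroy positive upper density at $x_0$, and Property \ref{prop}.8 (equivalently Remark \ref{rem3}) is exactly what guarantees this. The membership $V\in\mathcal{T}^+$ is verified through $\Phi^+$ of a null modification of $C$.
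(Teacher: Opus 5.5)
Your proof is correct and follows essentially the paper's route: i)$\Rightarrow$ii)$\Rightarrow$iii) come from $\Phi_d(A)\subset\Phi^+(A)$ and $\mathcal{T}_{nat}\subset\mathcal{T}_d$. For iii)$\Rightarrow$iv) the paper only reverses your order: it first forms the $\mathcal{T}^+$-open set $(A\cup\{x_0\})\cap\Phi^+(A\cup\{x_0\})$ and then intersects it with the $\mathcal{T}_d$-neighbourhood via Remark~\ref{rem3}, so both arguments rest on Property~\ref{prop}.8. One small precision: in your step 2 the identity actually needed is $\Phi^+(V)=\Phi^+(C)$, which holds since $V\triangle C\subset (C\setminus\Phi^+(C))\cup\{x_0\}$ is null (Property~\ref{prop}.2 and~\ref{prop}.5), and then $V\subset\Phi^+(C)=\Phi^+(V)$.
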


\begin{proof}
The implications \(i)\Rightarrow ii) \Rightarrow iii)\) follow directly from the set inclusions \(\mathcal{T}_{nat}\subset \mathcal{T}_{d} \subset \mathcal{T}^+\).

We will show that \(iii)\Rightarrow iv)\). Assume that \(f: \mathbb{R} \to \mathbb{R}\) is \(\mathcal{T}_{d}^+\)-approximatively continuous at \(x_0\).
Let \(U\in \mathcal{T}_{nat}\) be such that \(f(x_0)\in U\). 
By the definition of \(\mathcal{T}_{d}^+\)-approximative continuity, there exists a set \(A\in \mathcal{L}\) such that \(x_0 \in \Phi^+(A)\) and the restriction \(f|_{A\cup \{ x_0\} }\) is \(\mathcal{T}_{d}\)-continuous at \(x_0\).
Consequently, there exists a set \(B\) such that \(x_0 \in B \in \mathcal{T}_{d}\) and \(f(B\cap (A\cup \{ x_0\}))\subset U\).
Observe from Property \ref{topol} that \(C=(A\cup \{ x_0\}) \cap \Phi^+(A\cup \{ x_0\}) \in \mathcal{T}^+\).
By Remark \ref{rem3}, we have \(B\cap C\in \mathcal{T}^+\).
Therefore \(x_0 \in B\cap C \in \mathcal{T}_{d}\) and \(f(B\cap C)\subset U\).
This shows that \(f\) is \(\mathcal{T}^+\)-continuous at \(x_0\).
\end{proof}

\begin{property}
The implication \(ii)\Rightarrow i)\) does not hold.
\end{property}

\begin{proof}
Let $A=\bigcup_{n=1}^{\infty} (a_n, b_n) \subset (0,1)$ be right-sided interval set such that $0<\bar{D}(A,0)<1$.
Define
\[
f(x)=\begin{cases}
0 & x\in A\cup \{0\} \\
1 & x\notin A\cup \{0\}
\end{cases}
\]
Then \(f\) is \(\mathcal{T}^+\)-approximatively continuous at \(0\).
However, \(f\) is not approximatively continuous at \(0\).
Indeed, suppose for contradiction that there exists a set \(E\in \mathcal{L}\) such that \(0\in \Phi _{d} (E)\) and \(f|_{E\cup \{ 0\} }\) is continuous at \(0\) with respect to the natural topology.
Observe that \(((-\delta, \delta)\cap E)\cap (\mathbb{R}\setminus A) \neq \emptyset\) for every \(\delta >0\).
But this means that \(f|_{E\cup \{ 0\} }\) is not continuous at \(0\) with respect to the natural topology.
\end{proof}

\begin{property}
The implication \(iii)\Rightarrow ii)\) holds.
\end{property}

\begin{proof}
Let \(f: \mathbb{R} \to \mathbb{R}\) be \(\mathcal{T}_{d}^+\)-approximatively continuous at \(x_0\).
Hence there exists a set \(A \in \mathcal{L}\) such that \(x_0 \in \Phi^+(A)\) and \(g=f|_{A\cup \{ x_0\} }\) is \(\mathcal{T}_{d}\)-continuous at \(x_0\).
Thus there exists a set \(B\in \mathcal{L}\) such that \(x_0 \in \Phi _{d} (B)\) and \(g|_{B\cup \{ x_0\} }\) is continuous at \(x_0\).
By condition 8 of Property \ref{prop}, we get that \(x_0 \in \Phi^+(A\cap B)\) and \(g|_{B\cup \{ x_0\} }= f|_{(A\cap B)\cup \{ x_0\} }\).
This implies that \(f\) is \(\mathcal{T}^+\)-approximatively continuous at \(x_0\).
\end{proof}

\begin{property}
The implication \(iv)\Rightarrow iii)\) does not hold.
\end{property}

\begin{proof}
Let \(E_1=\bigcup_{n=1}^{\infty} (a_n^1 , b_n^1)\) be an interval set such that 
\(0< \ldots<a_n^1<b_n^1<a_{n-1}^1<b_{n-1}^1<\ldots <a_1^1<b_1^1\) and \(\bar{D}(E_1,0)=c>0\).

If \(E_k=\bigcup_{n=1}^{\infty} (a_n^k , b_n^k)\) is already constructed for some \(k\in \mathbb{N}\), then let 
\[
E_{k+1}=\bigcup_{n=1}^{\infty} \left( \frac{1}{2}(a_n^k + b_n^k), b_n^k\right) .
\]
Obviously, \(E_1\supset E_2\supset \ldots\) and \(\bar{D}(E_k,0) =\frac{c}{2^{k-1}}\), so \(\lim\limits_{k\to \infty} \bar{D}(E_k,0) =0\).
Define
\[
f(x)=\begin{cases}
\frac{1}{k} &  \text{if } x\in E_{k}\setminus E_{k+1} \\
0 & \text{for } x=0 \\
1 & \text{for remaining } x.
\end{cases}
\]
Then \(f\) is a \(\mathcal{T}^+\)-continuous function at each \(x\in \mathbb{R}\) but is not \(\mathcal{T}_{d}^+\)-approximately continuous at \(0\).
Indeed, suppose that \(f\) is \(\mathcal{T}_{d}^+\)-approximately continuous at \(0\).  
Then there exists a set \(A\in \mathcal{L}\) such that \(\bar{D}(A,0)>0\) and a set \(B\in \mathcal{T}_{d}\) with \(0\in B\) such that 
\(\lim\limits_{x\to 0 , x\in A\cap B} f(x)=f(0)=0\).
However, \(\bar{D}(A\cap B,0)=\bar{D}(A,0)>0\), and there exists \(n_0 \in \mathbb{N}\) for which \(\frac{c}{2^{n_0-1}}<\bar{D}(A,0)\).
Hence, for each \(k\in \mathbb{N}\), there exists \(|x_k|<\frac{1}{k}\) such that \(x_k\in (A\cap B) \setminus E_{n_0}\), so \(f(x_k)>\frac{1}{n_0}\).
This contradicts the fact that \(\lim\limits_{k\to \infty} f(x_k)=f(0)=0\).
\end{proof}

\begin{corollary}
There exists a function \(f:\mathbb{R}\to \mathbb{R}\) and a point \(x_0\in \mathbb{R}\) such that \(\mathcal{T}^+\)-continuity is not equivalent to \(\mathcal{T}^+\)-approximate continuity at \(x_0\).
\end{corollary}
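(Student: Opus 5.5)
The corollary is witnessed by the function \(f\) constructed in the proof of the preceding property, that \(iv)\Rightarrow iii)\) fails, taken at the point \(x_0=0\). Recall that \(f=1/k\) on \(E_k\setminus E_{k+1}\), \(f(0)=0\), and \(f=1\) elsewhere.

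The plan is to show that \(f\) is \(\mathcal{T}^+\)-continuous at \(0\) but not \(\mathcal{T}^+\)-approximatively continuous there. I read "\(\mathcal{T}^+\)-approximate continuity" in the statement as statement \(ii)\) of Theorem \ref{ciaglosci}. The argument has two steps.

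\emph{Step 1.} The preceding proof already shows that \(f\) is \(\mathcal{T}^+\)-continuous at \(0\). To check this directly, fix \(\varepsilon>0\) and choose \(k\) with \(1/k<\varepsilon\). Put \(V=(E_k\cup\{0\})\cap\Phi^+(E_k\cup\{0\})\).

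\begin{itemize}
\item By Property \ref{topol}, \(V\in\mathcal{T}^+\).
\item \(\bar D(E_k,0)=c/2^{k-1}>0\), so \(0\in\Phi^+(E_k)=\Phi^+(E_k\cup\{0\})\), and hence \(0\in V\).
\item On \(E_k\) we have \(f\le 1/k\), and \(f(0)=0\). So \(f(V)\subset[0,1/k]\subset(-\varepsilon,\varepsilon)\).
\end{itemize}

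Hence \(iv)\) holds at \(0\).

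\emph{Step 2.} The preceding proof shows that \(iii)\) fails at \(0\). Since \(ii)\Rightarrow iii)\) by Theorem \ref{ciaglosci}, \(ii)\) also fails at \(0\).

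For a self-contained argument, suppose instead that \(A\in\mathcal{L}\), \(\bar D(A,0)>0\), and \(f(x)\to 0\) as \(x\to0\) along \(A\). Choose \(n_0\) with \(c/2^{n_0-1}<\bar D(A,0)\).

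\begin{itemize}
\item By subadditivity of \(\bar D\), \(\bar D(A\setminus E_{n_0},0)\ge \bar D(A,0)-\bar D(E_{n_0},0)>0\).
\item Therefore \(A\setminus E_{n_0}\) contains points arbitrarily close to \(0\).
\item At every such point, \(f\ge 1/(n_0-1)\) or \(f=1\); in either case \(f\ge 1/n_0\).
\end{itemize}

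This contradicts \(f(x)\to 0\) along \(A\).

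The only delicate ingredient is the density computation \(\bar D(E_k,0)=c/2^{k-1}\), which is taken from the construction. Halving every interval halves the upper density, because the intervals shrink toward \(0\) in a controlled way. Granting this, nothing else is an obstacle, and the corollary follows.
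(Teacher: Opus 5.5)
Your proposal is correct and follows the paper's route: the corollary is read off from the function built in the proof that iv) does not imply iii), taken at $x_0=0$, with the failure of ii) deduced from the failure of iii) via ii)$\Rightarrow$iii) (and, as you also show, directly). Your check of $\mathcal{T}^+$-continuity at $0$ via $V=(E_k\cup\{0\})\cap\Phi^+(E_k\cup\{0\})$ and your subadditivity argument are sound. The identity $\bar D(E_k,0)=c/2^{k-1}$ that you grant does hold, but the reason is more specific than your heuristic: $\lambda(E_k\cap[0,h])/(2h)$ increases on each component and decreases on each gap, so its limsup is attained along the common right endpoints $b_n^1$, where the measure is exactly $2^{1-k}$ times that of $E_1$.
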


This corollary illustrates the distinction between \(\mathcal{T}^+\)-continuity and \(\mathcal{T}^+\)-approximate continuity. It is well known that in the case of density topology, \(\mathcal{T}_{d}\)-continuity is equivalent to \(\mathcal{T}_{d}\)-approximate continuity.
Now, we present the relationship between the measurability of a function and its various aspects of continuity appearing in Theorem \ref{ciaglosci}.

\begin{theorem}
Let \(f: \mathbb{R} \to \mathbb{R}\). Then the following conditions are equivalent:
\begin{enumerate}[i)]
 \item \(f\) is measurable.
 \item \(f\) is approximatively continuous almost everywhere (a.e.).
 \item \(f\) is \(\mathcal{T}^+\)-approximatively continuous a. e.
 \item \(f\) is \(\mathcal{T}_{d}^+\)-approximatively continuous a. e.
 \item \(f\) is \(\mathcal{T}^+\)-continuous a. e.
\end{enumerate}
\end{theorem}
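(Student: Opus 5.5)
The plan is to prove the cycle of implications \(i)\Rightarrow ii)\Rightarrow iii)\Rightarrow iv)\Rightarrow v)\Rightarrow i)\). The first three arrows are cheap. The implication \(i)\Rightarrow ii)\) is the classical theorem of Denjoy: a measurable function is approximatively continuous at almost every point. I would either quote it or reprove it via Lusin's theorem. Take closed sets \(F_n\) with \(\lambda(\mathbb{R}\setminus \bigcup_n F_n)=0\) and \(f|_{F_n}\) continuous. At every \(x\in F_n\cap \Phi_d(F_n)\), the set \(A=F_n\) witnesses approximate continuity. Since \(F_n\setminus \Phi_d(F_n)\in\mathbb{L}\) by the Lebesgue Density Theorem, this gives approximate continuity a.e.

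The implications \(ii)\Rightarrow iii)\Rightarrow iv)\) hold pointwise by Theorem \ref{ciaglosci}, and \(iv)\Rightarrow v)\) likewise holds pointwise. So \(ii)\Rightarrow v)\) is immediate, and the only substantial step is \(v)\Rightarrow i)\).

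For \(v)\Rightarrow i)\), let \(N\in\mathbb{L}\) be such that \(f\) is \(\mathcal{T}^+\)-continuous at every \(x\notin N\). Fix an open set \(U\subset\mathbb{R}\); it suffices to show that \(f^{-1}(U)\in\mathcal{L}\). Write
\[
f^{-1}(U)=\bigl(f^{-1}(U)\setminus N\bigr)\cup\bigl(f^{-1}(U)\cap N\bigr).
\]
The second part is a subset of a null set, hence measurable because \(\lambda\) is complete. For each \(x\in f^{-1}(U)\setminus N\), \(\mathcal{T}^+\)-continuity at \(x\) yields \(V_x\in\mathcal{T}^+\) with \(x\in V_x\) and \(f(V_x)\subset U\), so \(V_x\subset f^{-1}(U)\). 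Put \(W=\bigcup_{x} V_x\). By Theorem \ref{minus}, \(W\in\mathcal{T}^+\subset\mathcal{L}\). We then have
\[
f^{-1}(U)\setminus N\subset W\subset f^{-1}(U),
\]
hence \(f^{-1}(U)=W\cup (f^{-1}(U)\cap N)\), which is measurable.

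The main point to handle carefully is this last step. The union of uncountably many sets \(V_x\) has to be measurable, and this is exactly where Theorem \ref{minus} is used, through its kernel argument showing that \(\mathcal{T}^+\) is closed under arbitrary unions inside \(\mathcal{L}\). Once that is invoked, the argument is routine.
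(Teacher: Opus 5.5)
Your proof is correct and is essentially the paper's argument. For $(v)\Rightarrow(i)$ the paper re-runs the measurable-kernel argument directly on $E_\alpha\cap F$, after intersecting the $\mathcal{T}^+$-neighbourhoods with the set $F$ of continuity points; you instead invoke the closure of $\mathcal{T}^+$ under arbitrary unions from Theorem~\ref{minus} (whose proof is that same kernel argument) and sandwich $f^{-1}(U)$ between $W$ and $W\cup(f^{-1}(U)\cap N)$, which is slightly more economical. The paper only cites the literature for $(i)\Rightarrow(ii)$, and your Lusin-plus-density-theorem sketch of that step is also fine.
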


\begin{proof}
The proof of the implication \((i)\Rightarrow (ii)\) can be found in \cite{hw-b}.
The implications \((ii)\Rightarrow (iii)\Rightarrow (iv) \Rightarrow (v)\) are a consequence of Theorem \ref{ciaglosci}.
We will prove that \((v)\Rightarrow (i)\). Let \(f: \mathbb{R} \to \mathbb{R}\) be \(\mathcal{T}^+\)-continuous a.e..
We will show that \(E_{\alpha}=\{ x\in \mathbb{R}:\ f(x)<\alpha \} \in \mathcal{L}\) for every \(\alpha \in \mathbb{R}\). 
Let \(F\) be the set of \(\mathcal{T}^+\)-continuity points of \(f\).
It suffices to show that \(E_{\alpha}\cap F\in \mathcal{L}\).
Observe that
\begin{equation}\label{al-new}
\forall_{x\in   E_{\alpha}\cap F} \; \exists_{E_x\in \mathcal{L}} \;  E_x\subset (E_{\alpha}\cap F) \wedge x\in \Phi^+(E_x).
\end{equation}
Indeed, if \(x_0\in E_{\alpha}\cap F\), then \(f(x_0)<\alpha\). By the \(\mathcal{T}^+\)-continuity of \(f\) at \(x_0\), there exists \(A_{x_0}\in \mathcal{T}^+\) such that 
\(x_0\in A_{x_0}\) and \(f(x)<\alpha\) for \(x\in A_{x_0}\).
Let \(E_{x_0}= A_{x_0}\cap F\). Then \(\Phi^+ ( A_{x_0})=\Phi^+ ( A_{x_0}\cap F)=\Phi^+ ( E_{x_0})\).
Since \( A_{x_0}\subset \Phi^+ ( A_{x_0})\) and \(x_0\in A_{x_0}\), we obtain that \(x_0\in \Phi^+(E_{x_0})\) and \(E_{x_0}\subset (E_{\alpha}\cap F)\).

Now let \(B\) be a Lebesgue measurable kernel of the set \( E_{\alpha}\cap F\).
From (\ref{al-new}) it follows that for any \(x\in E_{\alpha}\cap F\), there exists \(E_x\in \mathcal{L}\) such that \(E_x\subset (E_{\alpha}\cap F)\) and \(x\in \Phi^+(E_x)\). 
Moreover \(\lambda(E_x\setminus B)=0\). Hence
\(\Phi^+(E_x)\subset \Phi^+ ((E_x\setminus B)\cup B) =\Phi^+(B)\). Therefore
\[
B\subset E_{\alpha}\cap F \subset \bigcup_{x\in E_{\alpha}\cap F} \Phi^+ (E_x)\subset \Phi^+(B).
\]
Since \(\lambda(\Phi^+ (B)\setminus B)=0\), we conclude that \( E_{\alpha}\cap F \in \mathcal{L}\).
This completes the proof.
\end{proof}

\begin{theorem}
There exists a function \(f: \mathbb{R} \to \mathbb{R}\) which is \(\mathcal{T}^+\)-continuous but does not possess the Darboux property on any interval \(I\).
\end{theorem}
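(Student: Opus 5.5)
We will build \(f\) by splitting \(\mathbb{R}\) into two measurable sets \(P\) and \(Q=\mathbb{R}\setminus P\), each of positive measure in every nondegenerate interval, and putting \(f=\chi_P\). We also arrange that \(P\) and \(Q\) are both in \(\mathcal{T}^+\), i.e. \(P\subset\Phi^+(P)\) and \(Q\subset\Phi^+(Q)\).

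Granting this, \(\mathcal{T}^+\)-continuity at every point is immediate. Fix \(x_0\in\mathbb{R}\) and \(U\in\mathcal{T}_{nat}\) with \(f(x_0)\in U\). Take \(V=P\) if \(x_0\in P\) and \(V=Q\) otherwise. Then \(V\in\mathcal{T}^+\), \(x_0\in V\), and \(f(V)=\{f(x_0)\}\subset U\).

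The Darboux property also fails on every interval \(I\). Since \(\lambda(P\cap I)>0\) and \(\lambda(Q\cap I)>0\), \(f\) takes both values \(0\) and \(1\) on \(I\). Its range is \(\{0,1\}\), so \(f(I)=\{0,1\}\) is not an interval.

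The whole difficulty is constructing \(P\). The membership \(Q\in\mathcal{T}^+\) must hold at every point, not just almost everywhere. So we need a measurable \(P\) with \(0<\lambda(P\cap I)<\lambda(I)\) for every interval \(I\), such that every point of \(P\) has positive upper density of \(P\) and every point of \(Q\) has positive upper density of \(Q\). The first property holds a.e. automatically by Property \ref{prop}.5, so only a null set of bad points remains to fix.

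We proceed in two steps.

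\textbf{Step 1.} Take a standard measurable \(P_0\) meeting every interval in positive but not full measure. For example, \(P_0\) can be a union of nowhere dense closed sets of positive measure placed densely, chosen so that its complement is also locally of positive measure. By Property \ref{prop}.5, replacing \(P_0\) by \(P_1=\Phi_d(P_0)\) changes it only on a null set. At every point of \(\Phi_d(P_0)\cup\Phi_d(Q_0)\) the required condition holds, where \(Q_0=\mathbb{R}\setminus P_0\).

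\textbf{Step 2.} The exceptional set is \(N=\mathbb{R}\setminus(\Phi_d(P_0)\cup\Phi_d(Q_0))\). It is null and consists of points where neither set has density \(1\). At a point \(x\in N\), \(P_0\) does not have density \(1\), so \(\bar D(Q_0,x)>0\). Likewise \(\bar D(P_0,x)>0\).

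Now define \(P=\Phi_d(P_0)\cup N\) and \(Q=\Phi_d(Q_0)\). These sets are complementary and measurable, and they differ from \(P_0\) and \(Q_0\) only by null sets.

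Check the points of \(P\). At \(x\in\Phi_d(P_0)\) the density of \(P\) is \(1\). At \(x\in N\) we have \(\bar D(P,x)=\bar D(P_0,x)>0\), using Property \ref{prop}.2. Check the points of \(Q\). At \(x\in\Phi_d(Q_0)\) the density of \(Q\) is \(1\).

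Hence \(P,Q\in\mathcal{T}^+\), and the local positivity of measure of both sets is inherited from \(P_0\) and \(Q_0\).

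The step expected to be the main obstacle is precisely this handling of the null exceptional set \(N\). Points of \(N\) could be assigned to either side, because both upper densities are positive there. By contrast, points of \(\Phi_d(P_0)\) cannot go to \(Q\). The sets \(\Phi_d(\cdot)\) are Borel, so measurability of \(P\) is not a problem.
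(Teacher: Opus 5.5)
Your proposal is correct and is essentially the paper's proof. The paper likewise takes complementary measurable sets $A_1,A_2$ of positive measure in every interval and adjoins the null exceptional set $B=\mathbb{R}\setminus(\Phi_d(A_1)\cup\Phi_d(A_2))$ to the side where $\bar D(A_1,x)>0$; as you observed, that is all of $B$, so the paper's $B_2$ is empty and its $\mathcal{T}^+$-clopen set $C_1=\Phi_d(A_1)\cup B_1$ is exactly your $P$, with $f=\chi_{C_1}$. The only slip is a citation: that $\Phi_d(P_0)\triangle P_0$ is null is the Lebesgue density theorem (or Properties 5 and 6 together), not Property 5 alone.
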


\begin{proof}
Let \(A_1\) and \(A_2\) be measurable subsets of \(\mathbb{R}\) such that \(A_1 \cup A_2=\mathbb{R}\), \(A_1 \cap A_2=\emptyset\), and for every interval \((a,b)\subset \mathbb{R}\), the following conditions hold:
\[ \lambda (A_1\cap (a,b))>0 \quad \text{and} \quad \lambda (A_2\cap (a,b))>0. \]
It follows that
\[ \Phi _{d}(A_1)\cap \Phi _{d}(A_2)=\emptyset. \]
Moreover, for any interval \((a,b)\subset \mathbb{R}\), we have
\[ (\Phi _{d}(A_1)\cup \Phi _{d}  (A_2))\cap (a,b)\neq (a,b). \]
This is a consequence of the fact that intervals are connected in \(\mathcal{T}_{d}\).
Therefore the sets \(\Phi _{d}(A_1)\) and \(\Phi _{d}  (A_2)\) are dense sets in \(\mathbb{R}\).

Let \(B=\mathbb{R} \setminus(\Phi _{d}(A_1)\cup \Phi _{d}  (A_2))\).
Hence \(B\) is a dense set of measure zero.
Define
\[ B_1=\{x\in B : D^+(A_1)>0\} \quad \text{and} \quad B_2=B\setminus B_1. \]
Then \(C_1=(\Phi _{d}(A_1)\cup B_1)  \in \mathcal{T}^+ \) and \(C_2=(\Phi _{d}(A_2)\cup B_2) \in \mathcal{T}^+ \).
Therefore \(C_1\cap C_2=\emptyset\) and \(C_1\cup C_2=\mathbb{R}\). The sets \(C_1\) and \(C_2\) are open and closed in \(\mathcal{T}^+\).
This implies that the function \(f(x)=\chi_{C_1}(x)\) is \(\mathcal{T}^+\)-continuous. However, for any interval \(I\), the set \(f(I)\) is not connected, which means that \(f\) does not possess the Darboux property on \(I\).
\end{proof}

\begin{theorem}
There exists a function \(f: \mathbb{R} \to \mathbb{R}\) that is \(\mathcal{T}^+\)-continuous but not Borel.
\end{theorem}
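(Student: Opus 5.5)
The plan is to build $f$ as the characteristic function of a suitable non-Borel set that is $\mathcal{T}^+$-clopen. The key observation is that $\chi_C$ is $\mathcal{T}^+$-continuous as soon as $C$ and $\mathbb{R}\setminus C$ both belong to $\mathcal{T}^+$. Indeed, for $x_0$ and an open $U\ni f(x_0)$, one of these two sets contains $x_0$ and is mapped into $\{f(x_0)\}\subset U$. So it suffices to find $C\subset\mathbb{R}$ with $C\in\mathcal{T}^+$, $\mathbb{R}\setminus C\in\mathcal{T}^+$, and $C$ not Borel. Then $f=\chi_C$ is not Borel, since $f^{-1}(\{1\})=C$.

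For the construction, take a closed null set $P$ of cardinality continuum, for instance the Cantor ternary set, and fix a non-Borel subset $Z\subset P$; one exists by a cardinality argument, because $P$ has $2^{\mathfrak c}$ subsets and there are only $\mathfrak c$ Borel sets. Put $C=(-\infty,0)\cup Z$ when $P\subset[0,1]$? This fails: points of $Z$ deep inside $[0,1]$ would need positive upper density of $C$, which $(-\infty,0)$ does not supply. Instead, mimic the proof of the previous theorem. Take $A_1,A_2$ as there: measurable, complementary, and each meeting every interval in positive measure. Since $P$ is null, $\Phi^+$ ignores it. Define
\[
C_1=(A_1\setminus P)\cap\Phi^+(A_1)\,\cup\,(P\cap Z'),
\]
where $Z'$ is a non-Borel subset of $P\cap\Phi^+(A_1)\cap\Phi^+(A_2)$, and let $C_2=\mathbb{R}\setminus C_1$.

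Verification goes as follows. A point of $C_1$ outside $P$ lies in $\Phi^+(A_1)=\Phi^+(C_1)$, because $C_1\triangle A_1$ is null by Property~\ref{prop}.5 and the nullity of $P$. A point of $Z'$ lies in $\Phi^+(A_1)=\Phi^+(C_1)$ by choice. Hence $C_1\subset\Phi^+(C_1)$. The set $C_1$ is measurable because it differs from $A_1$ by a null set. The same reasoning applies to $C_2$: its points off $P$ lie in $A_2$ up to the null set $A_2\setminus\Phi^+(A_2)$, which must be handled carefully; its points of $P\setminus Z'$ need to lie in $\Phi^+(A_2)$. So I would refine the split: points of $\mathbb{R}\setminus P$ go to $C_1$ if they belong to $\Phi^+(A_1)\cap(A_1\cup(\mathbb{R}\setminus\Phi^+(A_2)))$ and to $C_2$ otherwise, exactly as $B_1,B_2$ were chosen in the previous proof, so that every point of $C_i$ has positive upper density of $A_i$. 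For this to work, every point must lie in $\Phi^+(A_1)\cup\Phi^+(A_2)$, which holds since $A_1\cup A_2=\mathbb{R}$ and $\bar D$ is subadditive.

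The main obstacle is guaranteeing a non-Borel set inside $P\cap\Phi^+(A_1)\cap\Phi^+(A_2)$, which requires this intersection to have cardinality $\mathfrak c$. The simplest fix is to choose $A_1$ adapted to $P$. Take $P$ to be the Cantor set and let $A_1$ contain, in each complementary gap of $P$ of length $\ell$, a subinterval of length $\ell/2$ adjacent to the left endpoint, with $A_2$ the complement. Then every point of $P$ sees both sets with upper density bounded below. I expect the density estimate at Cantor points, including non-endpoints and two-sided windows, to be the step requiring care; the gaps near a Cantor point at scale $h$ occupy a proportion bounded below, giving $\bar D(A_i,x)>0$. Then $P\subset\Phi^+(A_1)\cap\Phi^+(A_2)$, any non-Borel $Z'\subset P$ works, and $C_1$ is non-Borel: otherwise $C_1\cap P=Z'$ would be Borel, since $P$ is closed.
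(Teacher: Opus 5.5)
Your final construction is correct and is essentially the paper's: you take a perfect null set and split its complementary gaps into two families, each with positive upper density at every point of the set, then attach an arbitrary non-Borel $Z'\subset P$ to one family, so that $f=\chi_{A_1\cup Z'}$ is $\mathcal{T}^+$-continuous (with your adapted $A_1$, the ``refined split'' reduces exactly to $C_1=A_1\cup Z'$) and not Borel. The differences are cosmetic. You use the ternary Cantor set and split each gap into left and right halves; the density step you flagged is easy, since for $x$ in a stage-$n$ interval $I$ the window $[x-3^{-n},x+3^{-n}]$ contains $I$ and hence a whole stage-$(n+1)$ gap, giving $\bar{D}(A_i,x)\ge\frac{1}{12}$. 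The paper instead builds a special Cantor-type null set and assigns alternate generations of gaps to $A$ and $B$, which gives densities $\ge\frac12$.
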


\begin{proof}
We construct a perfect, nowhere dense subset of \([0,1]\) as follows.
From \([0,1]\), remove the open interval \(J_1^1\) with center \(\frac{1}{2}\) and length \(\frac{1}{2}\).
The two remaining intervals, \(I_1^1\) and \(I_2^1\), each have length \(\frac{1}{4}\).
Next from \(I_1^1\) and \(I_2^1\) remove two concentric open intervals \(J_2^1\) and \(J_2^2\), each with length \(\frac{2}{3}\lambda (I_1^1)\).
The remaining closed intervals are denoted as \(I_i^2\), for \(i=1,2,3,4\).

This process is continued inductively. 
If \(I_i^n\), for \(i=1,2,\ldots ,2^n\), are closed intervals such that
\[ \lambda (I_i^n)=\left( \frac{1}{2} \right)^n \cdot \frac{1}{2} \cdot \frac{1}{3} \cdot \ldots \cdot \frac{1}{n+1}, \]
then from each \(I_i^n\), remove the concentric open interval \(J_i^{n+1}\) with length \(\frac{n+1}{n+2}\lambda (I_i^n)\).
The remaining closed intervals are denoted as \(I_i^{n+1}\), for \(i=1,2,\ldots ,2^{n+1}\).

Let
\[ A=\bigcup_{n=1}^{\infty} \bigcup_{i=1}^{2^{2n-2}} J_i^{2n-1}, \]
\[ B=\bigcup_{n=1}^{\infty} \bigcup_{i=1}^{2^{2n-1}} J_i^{2n}, \]
and
\[ C=[0,1] \setminus(A\cup B) \quad \text{(thus } C=\bigcap_{n=1}^{\infty} \bigcup_{i=1}^{2^{n}} I_i^{n}). \]
Obviously \(A\cap B=\emptyset\), \(\lambda (C)=0\) and \(\lambda(A)+\lambda(B)=1\).
It is easy to see that for each \(x\in C\)
\[ \bar{D}(A,x)\geq \frac{1}{2} \quad \text{and} \quad \bar{D}(B,x)\geq \frac{1}{2}. \]
Hence if \(X\subset C\), then \(A\cup X\in \mathcal{T}^+\) and \(B\cup (C\setminus X) \in \mathcal{T}^+\).
Therefore the function \(f=\chi_{A\cup X}\) is \(\mathcal{T}^+\)-continuous.
However, \(C\) is perfect, so its cardinality is that of the continuum, and there exists a set \(X\subset C\) which is not a Borel set.
Hence a \(\mathcal{T}^+\)-continuous function is not necessarily Borel measurable.
Recall that approximately continuous functions are Baire one.
\end{proof}

\begin{theorem}
The space \(\langle \mathbb{R}, \mathcal{T}^+ \rangle\) is regular.
\end{theorem}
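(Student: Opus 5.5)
The plan is to separate a point from a closed set directly, by building a $\mathcal{T}^+$-open neighbourhood of the point whose complement is also $\mathcal{T}^+$-open. Let $F$ be $\mathcal{T}^+$-closed and $x\notin F$. Put $U=\mathbb{R}\setminus F\in\mathcal{T}^+$; then $U\in\mathcal{L}$ and $x\in U\subset\Phi^+(U)$. It suffices to find $V\in\mathcal{T}^+$ with $x\in V\subset U$ such that $W=\mathbb{R}\setminus V\in\mathcal{T}^+$. Then $V$ and $W$ are disjoint open sets with $x\in V$ and $F\subset W$.

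The difficulty lies in the points of $F$ where $F$ itself is thin. Put $Z=F\setminus\Phi^+(F)$. By condition 5 of Property \ref{prop}, $Z\in\mathbb{L}$, and $x\notin Z$. We cannot simply take $V=(U\cap\Phi_d(U))\cup\{x\}$: a point $z\in Z$ may lie in $\Phi_d(U)$ (for example $F=\{0\}$), and then no measurable set disjoint from $V$ can have positive upper density at $z$. So first I will enclose $Z$ in a natural-open set $G$ that is small near $x$. Let $c=\bar{D}(U,x)>0$ and fix $\varepsilon>0$ with $\varepsilon<c/8$. Split $\mathbb{R}\setminus\{x\}$ into the rings $R_k=\{t:2^{-k-1}<|t-x|\le 2^{-k}\}$, $k\in\mathbb{Z}$. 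Each $Z\cap R_k$ is null, so by outer regularity we can choose an open $G_k\supset Z\cap R_k$ with $\lambda(G_k)<\varepsilon 2^{-k}$ and $x\notin G_k$. Let $G=\bigcup_k G_k$. Then $G$ is open, $Z\subset G$ and $x\notin G$. A short ring-summation shows $\lambda(G\cap[x-h,x+h])\le C\varepsilon h$ for a fixed constant $C$. Hence, after shrinking $\varepsilon$ relative to $c$, we still have $\bar{D}(U\setminus G,x)>0$. This estimate is the step I expect to need the most care, because each $G_k$ must be controlled not only inside its own ring but also where it spills into neighbouring rings; I would enforce this by choosing each $G_k$ inside a slight enlargement of $R_k$.

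Now define
\[
V=\bigl((U\setminus G)\cap\Phi_d(U\setminus G)\bigr)\cup\{x\},\qquad W=\mathbb{R}\setminus V .
\]
By the Lebesgue Density Theorem, $V\triangle(U\setminus G)\in\mathbb{L}$. By condition 2 of Property \ref{prop}, $\Phi^+(V)=\Phi^+(U\setminus G)$, so $x\in\Phi^+(V)$. Every other point of $V$ lies in $\Phi_d(V)$. Therefore $V\in\mathcal{T}^+$, and clearly $x\in V\subset U$.

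It remains to check $W\subset\Phi^+(W)$, which I will do by cases on $y\in W$. We have $W\supset F\cup G$, and $W$ is measurable. If $y\in G$, then $G$ is a natural neighbourhood of $y$ contained in $W$, so $y\in\Phi_d(W)$. If $y\in F\setminus Z$, then $y\in\Phi^+(F)\subset\Phi^+(W)$ by condition 3 of Property \ref{prop}. If $y\in U\setminus V$ with $y\neq x$, then either $y\in G$, handled above, or $y\notin\Phi_d(U\setminus G)$. In the latter case $\bar{D}(\mathbb{R}\setminus(U\setminus G),y)>0$. Since $\mathbb{R}\setminus(U\setminus G)=F\cup G\subset W$, we get $y\in\Phi^+(W)$. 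Hence $W\in\mathcal{T}^+$, $F\subset W$ and $V\cap W=\emptyset$, which proves regularity.
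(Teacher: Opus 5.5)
Your proof is correct, and it takes a genuinely different route from the paper's.

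\textbf{The paper's route.} The paper never touches the null set of points where the closed set is thin. Given $x\in V\in\mathcal{T}^+$ (your $U$), it picks $h_n\to 0$ realizing $\bar{D}(V,x)>0$ and chooses closed sets $F_n\subset V\cap[x-h_n,x+h_n]$ of almost full measure. The set $F=\{x\}\cup\bigcup_n F_n$ is then closed in the natural topology, lies in $V$, and has $x\in\Phi^+(F)$. Hence $W=F\cap\Phi^+(F)\in\mathcal{T}^+$ contains $x$, and $\operatorname{cl}_{\mathcal{T}^+}(W)\subset\operatorname{cl}_{\mathcal{T}_{nat}}(F)=F\subset V$, because natural closed sets are $\mathcal{T}^+$-closed. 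This uses only inner regularity of $\lambda$ and needs no density estimate beyond the choice of the $F_n$.

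\textbf{Your route.} You use outer regularity to absorb the null set $Z=F\setminus\Phi^+(F)$ into an open set $G$ with $x\notin G$ and small upper density at $x$. Your ring estimate works once you take $G_k\subset\{t:2^{-k-2}<|t-x|<2^{-k+1}\}$: this gives $\lambda(G\cap[x-h,x+h])<8\varepsilon h$, so $\bar{D}(G,x)\le 4\varepsilon<c/2$. You then use the dichotomy that each point lies either in $\Phi_d(S)$ or in $\Phi^+(\mathbb{R}\setminus S)$ to show that $V$ is $\mathcal{T}^+$-clopen.

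\textbf{What each buys.} The paper's argument is shorter. Yours costs the extra estimate but proves more: every $\mathcal{T}^+$-neighbourhood of every point contains a $\mathcal{T}^+$-clopen neighbourhood. Consequently $\chi_V$ is $\mathcal{T}^+$-continuous at every point, with $\chi_V(x)=1$ and $\chi_V|_F=0$. So your construction also yields complete regularity of $\langle\mathbb{R},\mathcal{T}^+\rangle$, which the paper's introduction leaves as an open question.
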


\begin{proof}
Let \(x\in \mathbb{R}\) and \(V\in \mathcal{T}^+\) such that \(x\in V\). 
We aim to find a set \(W\in \mathcal{T}^+\) such that \(x\in W \subset \text{cl}_{ \mathcal{T}^+} (W) \subset V\).
Since \(x\in \Phi^+ (V)\), there exists a sequence \(\{ h_n \}_{n\in \mathbb{N}}\) tending to zero such that
\[ \lim_{n\to \infty} \frac{\lambda \left( V\cap [x-h_n,x +h_n]\right) }{2h_n}=\alpha >0. \]
For every \(n\in \mathbb{N}\) choose a closed set \(F_n \subset V\cap [x-h_n,x +h_n]\) such that
\[ \lambda\left(\left( V\cap [x-h_n,x +h_n]\right) \setminus F_n\right) < \lambda \left( V\cap [x-h_n,x +h_n]\right) \cdot 2h_n. \]
Then \(\lambda (F_n) >\lambda \left( V\cap [x-h_n,x +h_n]\right) \cdot (1-2h_n)\).
Define \(F=\{x\}\cup \bigcup_{n\in \mathbb{N} } F_n\). Then \(x\in F\subset V\).
We will now show that \(F\) is closed in the natural topology.
Fix a sequence \(\{ x_k \}_{k\in \mathbb{N}}\subset F\) tending to some point \(x_0\).
If infinitely many terms of the sequence \(\{ x_k \}_{k\in \mathbb{N}}\) belong to some set \(F_{n_0}\), then \(x_0\in F_{n_0}\subset F\).
If not, then there exist subsequences \(\{ n_j \}_{j\in \mathbb{N}}\) and \(\{ k_j \}_{j\in \mathbb{N}}\) such that \(x_{k_j} \in F_{n_j}\).
Hence \(|x-x_{k_j}|<2h_{n_j} \underset{j\to \infty}{\longrightarrow}0\).
Therefore \(x_{k_j} \underset{j\to \infty}{\longrightarrow} x\) and thus \(x_0=x\in F\).
Moreover,
\begin{align*}
\frac{\lambda \left( F\cap [x-h_n,x +h_n]\right) }{2h_n} &\geq
\frac{\lambda \left( F_n\cap [x-h_n,x +h_n]\right) }{2h_n} \\
&>\frac{\lambda \left( V\cap [x-h_n,x +h_n]\right) \cdot (1-2h_n) }{2h_n}.
\end{align*}
This implies that
\[ \limsup_{n\to \infty} \frac{\lambda \left( F\cap [x-h_n,x +h_n]\right) }{2h_n} \geq\alpha >0. \]
Hence \(x\in \Phi^+ (F)\). In consequence
\[ F\cap \Phi^+ (F) \subset \text{cl}_{ \mathcal{T}^+} (F\cap \Phi^+ (F)) \subset \text{cl}_{ \mathcal{T}^+} (F)\subset \text{cl}_{ \mathcal{T}_{nat}} (F)=F\subset V. \]
Setting \(W=F\cap \Phi^+ (F)\), we obtain that \(x\in W \subset \text{cl}_{ \mathcal{T}^+} (W) \subset V\).
\end{proof}

\begin{lemma}
If \(A, B \in \mathbb{L}\) and \(A \cap B = \emptyset\), then there exist \(C, D \in \mathcal{T}^+\) such that \(A \subset C\), \(B \subset D\), and \(C \cap D = \emptyset\).
\end{lemma}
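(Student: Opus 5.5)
We will look for a single measurable set \(E\) that splits every neighbourhood of every point in half in the sense of upper density. Then we put the points of \(A\) on one side, the points of \(B\) on the other, and use the fact that adding or removing null sets does not change \(\Phi^+\).

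The first step is to fix measurable sets \(A_1, A_2\) with \(A_1\cup A_2=\mathbb{R}\), \(A_1\cap A_2=\emptyset\), and such that \(\bar{D}(A_1,x)>0\) and \(\bar{D}(A_2,x)>0\) for every \(x\in\mathbb{R}\). The sets used in the theorem on the failure of the Darboux property (both meeting every interval in positive measure) do not obviously give positive upper density at every point. So we construct the sets directly. We take \(A_1\) to be a set with \(\lambda(A_1\cap I)=\tfrac12\lambda(I)\) for every dyadic interval \(I\), for example the set where the binary digit sequence of \(x\) has a prescribed parity pattern. Concretely, let \(A_1=\bigcup_{k\in\mathbb{Z}}\bigcup_{j} [\,j2^{-k},\,j2^{-k}+2^{-k-1})\) restricted appropriately. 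It is simpler, however, to use the following characterization: \(\lambda(A_1\cap J)\ge c\,\lambda(J)\) and \(\lambda(A_2\cap J)\ge c\,\lambda(J)\) for every interval \(J\), with a fixed \(c>0\). That is impossible, since it would contradict the Lebesgue density theorem. Hence we only require the bound along \emph{some} sequence of scales at each point.

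A workable construction is the Cantor-type scheme of the theorem on non-Borel \(\mathcal{T}^+\)-continuous functions, transplanted to every unit interval \([m,m+1]\). This gives disjoint open sets \(A,B\) of that construction (renamed \(P,Q\) here) and a perfect null set \(C\) with \(\bar{D}(P,x)\ge\tfrac12\) and \(\bar{D}(Q,x)\ge\tfrac12\) for \(x\in C\). Points not in \(C\) lie in the open set \(P\) or \(Q\), or are integers. We will rather do the following, which avoids the issue of points outside \(C\).

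The main step is the following claim: there is a measurable \(E\) with \(\bar{D}(E,x)>0\) and \(\bar{D}(\mathbb{R}\setminus E,x)>0\) for all \(x\). We take
\[
E=\bigcup_{k\in\mathbb{Z}}\,\bigcup_{j\in\mathbb{Z}}\bigl[\,j4^{-k},\,j4^{-k}+4^{-k-1}\bigr) \ \text{with parity bookkeeping},
\]
and the cleanest way to make this precise is the "digit" set \(E=\{x: \text{the first nonzero ternary digit pattern} \ldots\}\). Honestly, the expected obstacle is exactly producing such an \(E\) with positive upper density of both \(E\) and its complement at \emph{every} point, including exceptional points like endpoints. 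Our first attempt will be the set \(E=\bigcup_{n\in\mathbb{Z}}\{x\neq0:\ 2^{-2n-1}<\operatorname{dist}(x,\mathbb{Z}[\tfrac12]\ldots)\}\). If that becomes messy, we fall back on the following. A countable dense set is null, so we handle points individually: we only need the property on \(A\cup B\), not on all of \(\mathbb{R}\).

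Indeed, the lemma only needs \(C\supset A\), \(D\supset B\), and it suffices that \(C\setminus A\) and \(D\setminus B\) provide density at points of \(A\) and \(B\) respectively. So we set \(C=(E\setminus B)\cup A\) and \(D=((\mathbb{R}\setminus E)\setminus A)\cup B\). These are measurable, \(C\cap D=\emptyset\) because \(A\cap B=\emptyset\), and \(C\triangle E\), \(D\triangle(\mathbb{R}\setminus E)\) are null. By Property~\ref{prop}.2 we get \(\Phi^+(C)=\Phi^+(E)=\mathbb{R}\) and \(\Phi^+(D)=\mathbb{R}\), so \(C,D\in\mathcal{T}^+\). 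Thus everything reduces to the existence of \(E\) with \(\Phi^+(E)=\Phi^+(\mathbb{R}\setminus E)=\mathbb{R}\).

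To build \(E\), we choose a sequence of scales \(r_k=4^{-k^2}\) (rapidly decreasing) and put \(E=\bigcup_{k\ \mathrm{even}}\{x: r_{k+1}\le \operatorname{dist}(x, r_k\mathbb{Z}) < r_k/2\}\)-type annuli. The rapid decrease should ensure that for every \(x\), at scale \(h=r_k\) the interval \([x-h,x+h]\) is filled, up to proportion \(o(1)\), by a pattern determined at level \(k\) alone. For every \(x\), we then check that alternating levels give density at least a fixed constant of \(E\) (even \(k\)) and of its complement (odd \(k\)). This verification, uniform in \(x\), is the step we expect to be the main obstacle.
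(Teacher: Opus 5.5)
\textbf{There is a genuine gap: the claim you reduce the lemma to is false.} The bookkeeping step itself is fine. For measurable $E$, the sets $C=(E\setminus B)\cup A$ and $D=((\mathbb{R}\setminus E)\setminus A)\cup B$ are disjoint and differ from $E$ and $\mathbb{R}\setminus E$ by null sets.

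No measurable $E$ satisfies $\Phi^+(E)=\Phi^+(\mathbb{R}\setminus E)=\mathbb{R}$. By condition~5 of Property~\ref{prop}, $\Phi^+(E)\triangle E\in\mathbb{L}$, so $\Phi^+(E)=\mathbb{R}$ gives $\mathbb{R}\setminus E\in\mathbb{L}$. Symmetrically, $\Phi^+(\mathbb{R}\setminus E)=\mathbb{R}$ gives $E\in\mathbb{L}$, which is absurd. Equivalently, almost every point of $E$ is a density point of $E$, and there $\bar{D}(\mathbb{R}\setminus E,x)=0$.

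This is exactly the objection you raised against the uniform bound $\lambda(E\cap J)\ge c\,\lambda(J)$, and it already rules out the pointwise version. Your $A_1,A_2$, the digit sets, and the annuli at scales $4^{-k^2}$ must all fail at almost every point. The step you flagged as the main obstacle is impossible, not merely hard. Your own remark about the Cantor scheme shows the phenomenon: points of the open sets $P,Q$ are density points of one of them.

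The missing idea is that the splitting set must be tailored to the null set $A\cup B$. Nullness has to be used in building it, not only in the final observation that $C\triangle E\in\mathbb{L}$. Your fallback sentence (``we only need the property on $A\cup B$'') points the right way, but you then still demanded $\Phi^+(C)=\mathbb{R}$.

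What suffices is a measurable $E$ with $A\subset\Phi^+(E)$ and $B\subset\Phi^+(\mathbb{R}\setminus E)$. Points off $A\cup B$ are then handled by the Lebesgue density theorem. Before adjoining $A$ and $B$, replace $E$ by $E\cap\Phi_d(E)$ and $\mathbb{R}\setminus E$ by $(\mathbb{R}\setminus E)\cap\Phi_d(\mathbb{R}\setminus E)$.

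The paper obtains such sets by Goffman's construction. Take decreasing open sets $G_n\supset A\cup B$ with $\lambda(G_{n+1}\cap(a,b))=\frac{1}{n}(b-a)$ for every component $(a,b)$ of $G_n$; this is where $\lambda(A\cup B)=0$ enters. Put $E=\bigcup_n(G_{4n+1}\setminus G_{4n+2})$ and $F=\bigcup_n(G_{4n+3}\setminus G_{4n+4})$. These sets are disjoint and miss $A\cup B$. Each has positive upper density at every point of $A\cup B$, since the component of $G_{4n+1}$ containing such a point is almost filled by $E$, and similarly for $F$. Then $C=A\cup(\Phi_d(E)\cap E)$ and $D=B\cup(\Phi_d(F)\cap F)$ work.
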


\begin{proof}
Let \(G_1 \supset G_2 \supset \ldots \supset G_n \supset \ldots\) be a sequence of open (in the natural topology) sets such that \(G_n \supset A \cup B\) for each \(n \in \mathbb{N}\).
Since \(A \cup B \in \mathbb{L}\), the sets \(G_n\) may be chosen such that for each \(n \in \mathbb{N}\), if \((a,b)\) is a component of \(G_n\), then
\[ \lambda(G_{n+1}\cap (a,b))=\frac{1}{n}(b-a). \]
Let \(E=\bigcup_{n\in \mathbb{N}}( G_{4n+1} \setminus G_{4n+2})\) and
\(F=\bigcup_{n\in \mathbb{N}}( G_{4n+3} \setminus G_{4n+4})\). 
Then \((A\cup E) \cap (B\cup F)= \emptyset\).
The upper density of the set \(E\) at each point of \(A\) is equal to \(1\).
Observe that \(A \cup (\Phi _{d} (E)\cap E)\in \mathcal{T}^+\) because
\[ A\subset \Phi^+(E)=\Phi^+ (\Phi _{d} (E)\cap E)\subset \Phi^+ (A\cup (\Phi _{d} (E)\cap E)) \]
and 
\[ \Phi _{d} (E)\cap E \subset \Phi^+ (\Phi _{d} (E)\cap E)\subset \Phi^+ (A\cup (\Phi _{d} (E)\cap E)). \]
Similarly, \(B \cup (\Phi _{d} (F)\cap F)\in \mathcal{T}^+\). Setting
\(C= A \cup (\Phi _{d} (E)\cap E)\) and \(D=B \cup (\Phi _{d} (F)\cap F)\), we obtain \(C, D\in \mathcal{T}^+\), \(C\cap D=\emptyset\), \(A\subset C\), and \(B\subset D\).
\end{proof}

The idea of the above proof is due to Goffman (\cite{hw-g}).

\begin{theorem}
The space \(\langle \mathbb{R}, \mathcal{T}^+ \rangle\) is normal.
\end{theorem}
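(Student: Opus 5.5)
We prove normality by reducing to the preceding Lemma. Let \(P, Q\) be disjoint \(\mathcal{T}^+\)-closed sets. Then \(\mathbb{R}\setminus P,\ \mathbb{R}\setminus Q \in \mathcal{T}^+ \subset \mathcal{L}\), so \(P, Q \in \mathcal{L}\). By Corollary \ref{uwaga}.2 we have \(\Phi_d(P)\subset P\) and \(\Phi_d(Q)\subset Q\).

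The first step is to split each closed set into a "thick" part and a null part. Put
\[
P_0=P\cap \Phi_d(P), \qquad Q_0=Q\cap\Phi_d(Q).
\]
Both belong to \(\mathcal{T}_d\subset\mathcal{T}^+\): a measurable set differing from \(\Phi_d(P)\) by a null set has \(\Phi_d\) equal to \(\Phi_d(P)\), and it is contained in that set. By the Lebesgue Density Theorem, the remainders \(P_1=P\setminus\Phi_d(P)\) and \(Q_1=Q\setminus\Phi_d(Q)\) are null sets. Since \(\Phi_d(P)\subset P\), in fact \(P_0=\Phi_d(P)\), and likewise \(Q_0=\Phi_d(Q)\).

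Disjointness of the thick parts from the other closed set comes from density. If \(x\in P_0\), then \(D(P,x)=1\). Since \(P\cap Q=\emptyset\), this gives \(D(Q,x)=0\), so \(x\notin\Phi^+(Q)\). Thus \(P_0\cap\Phi^+(Q)=\emptyset\), and symmetrically \(Q_0\cap\Phi^+(P)=\emptyset\).

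Next we apply the Lemma to the disjoint null sets \(P_1\) and \(Q_1\). It yields disjoint \(C\supset P_1\) and \(D\supset Q_1\) in \(\mathcal{T}^+\). We may assume \(C\) and \(D\) have measure zero, since the construction there produces sets of arbitrarily small measure. Without that assumption, we could instead intersect with suitable complements. The natural candidates for separating neighbourhoods are
\[
U=P_0\cup\bigl(C\setminus \operatorname{cl}_{\mathcal{T}^+}(Q_0)\bigr),\qquad W=Q_0\cup\bigl(D\setminus\operatorname{cl}_{\mathcal{T}^+}(P_0)\bigr).
\]
A cleaner alternative is to take \(U=(P_0\cup C)\setminus Q\cup P\) adjusted suitably. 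The key facts used are Remark \ref{rem3}, that \(\mathcal{T}^+\)-open sets intersected with \(\mathcal{T}_d\)-open sets stay \(\mathcal{T}^+\)-open, and that removing a \(\mathcal{T}^+\)-closed set from a \(\mathcal{T}^+\)-open set keeps it open. Here "\(\mathcal{T}^+\)-closed" is in the sense that \(V\setminus F = V\cap(\mathbb{R}\setminus F)\), and this needs care since \(\mathcal{T}^+\) is not closed under intersections.

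The main obstacle is precisely that \(\mathcal{T}^+\) is not closed under finite intersections. So \(C\setminus Q\) need not be open, and we must avoid naive intersections. My plan is to exploit the measure: \(C\setminus Q\) is open because each point \(y\in P_1\) lies in \(C\) with \(\bar D(C,y)>0\). We then need \(\bar D(C\setminus Q,y)>0\). This holds because \(y\notin\Phi^+(Q)\): indeed \(y\in P\) and \(P\) is \(\mathcal{T}^+\)-closed, and every point of \(\mathbb{R}\setminus Q\) lies in \(\mathbb{R}\setminus Q\subset\Phi^+(\mathbb{R}\setminus Q)\). This still does not give \(D(Q,y)=0\); it only shows the complement of \(Q\) has positive upper density.

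I would therefore refine the Lemma's construction. Choose the open sets \(G_n\) inside \(\mathbb{R}\setminus\) a closed-in-measure approximation of \(Q\) near \(P_1\), so that the sets \(E\), \(F\) produced there lie in \(\mathbb{R}\setminus(P\cup Q)\) up to density. Concretely, we replace \(E\) by \(E\cap\Phi_d(\mathbb{R}\setminus Q)\cap\Phi_d(\mathbb{R}\setminus P)\), keeping upper density positive at points of \(P_1\). Finally we verify, via Property \ref{prop}.8, that \(U=P\cup(\Phi_d(E')\cap E')\) and \(W=Q\cup(\Phi_d(F')\cap F')\) are disjoint \(\mathcal{T}^+\)-open sets.
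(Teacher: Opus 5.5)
\paragraph{Verdict.} Your overall route is essentially the paper's: split each closed set into its $\Phi_d$-part and a null part, invoke the Goffman-type lemma for the null parts, and patch with condition 8 of Property 1. However, it breaks at the decisive step, so there is a genuine gap.

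\paragraph{What is correct.} The reduction you end with is right. Put $R=\mathbb{R}\setminus(P\cup Q)$ and suppose $E',F'$ are disjoint measurable subsets of $R$. Then $U=P\cup(\Phi_d(E')\cap E')$ and $W=Q\cup(\Phi_d(F')\cap F')$ are disjoint. Moreover, $U$ is $\mathcal{T}^+$-open at every point except possibly the points $y\in P\setminus\Phi^+(P)$, where one needs $\bar{D}(E',y)>0$; the same holds symmetrically for $W$.

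\paragraph{Where it fails.} That requirement is the whole content of the theorem, and you only assert it. Closedness of $Q$ gives $\bar{D}(R,y)>0$ at such $y$, and the lemma gives $\bar{D}(E,y)>0$. But two sets with positive upper density at $y$ can meet in a set of upper density $0$, since the two $\limsup$'s may be attained along different radii. The lemma's nested sets $G_n$ give no control over which radii carry $E$. So the claim that $E'=E\cap\Phi_d(\mathbb{R}\setminus Q)\cap\Phi_d(\mathbb{R}\setminus P)$, which is $E\setminus Q$ up to a null set, keeps positive upper density at the points of $P_1$ is unjustified, and in general false.

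\paragraph{A counterexample.} Let $P=\{0\}$ and $Q=\bigcup_k\bigl([s_k,t_k]\cup[-t_k,-s_k]\bigr)$, where $t_{k+1}<s_k<t_k$ and $s_k/t_k\to 0$, $t_{k+1}/s_k\to 0$ fast enough.
\begin{itemize}
\item We have $\Phi_d(Q)\subset Q$, so $P$ and $Q$ are disjoint $\mathcal{T}^+$-closed sets.
\item $R$ has upper density $1$ but lower density $0$ at $0$.
\item In the lemma, applied to $\{0\}$ and the endpoints of $Q$, one may take the components of $G_{2k-1}$ and $G_{2k}$ containing $0$ to be slight enlargements of $(-t_k,t_k)$ and $(-s_k,s_k)$.
\item Then near $0$ every shell $G_{2k-1}\setminus G_{2k}$ used for $E$ or $F$ lies in $Q$, up to pieces of negligible relative measure.
\end{itemize}
Hence $\bar{D}(E,0)>0$ while $\bar{D}(E',0)=0$, and your $U$ is not $\mathcal{T}^+$-open at $0$.

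Your suggestion to choose the $G_n$ ``inside'' the complement of $Q$ cannot be taken literally: each $G_n$ contains a neighbourhood of $0$, which meets $Q$. What is needed is a construction that places the pieces of $E'$ and $F'$ at radii where $R$ is dense. It must do this simultaneously for all points of the null sets $P\setminus\Phi^+(P)$ and $Q\setminus\Phi^+(Q)$, whose good radii vary from point to point. Nothing in the proposal supplies this.

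\paragraph{Two further errors.}
\begin{itemize}
\item A nonempty $\mathcal{T}^+$-open set is never null, since $\Phi^+$ of a null set is empty. So $C$ and $D$ cannot be assumed to have measure zero.
\item Removing a $\mathcal{T}^+$-closed set from a $\mathcal{T}^+$-open set need not leave an open set. For example, $(0,1]\setminus(\mathbb{R}\setminus[1,2])=\{1\}$.
\end{itemize}

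\paragraph{The paper's proof does not close this gap either.} Write $A,B$ for the paper's closed sets, in place of your $P,Q$.
\begin{itemize}
\item As typeset, the paper's $C$ and $D$ contain $\mathbb{R}\setminus B$ and $\mathbb{R}\setminus A$ respectively. So both contain $\mathbb{R}\setminus(A\cup B)$ and are not disjoint in general.
\item The piece $A_2\cup(\Phi_d(E)\cap E)$ of $C$ may meet $\Phi_d(B)$.
\end{itemize}
The step it skips is exactly the one missing from your proof.
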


\begin{proof}
Let \(A, B\subset \mathbb{R}\) be \(\mathcal{T}^+\)-closed and disjoint sets. 
We shall find \(C, D\in \mathcal{T}^+\) such that \(A\subset C\), \(B\subset D\), and \(C\cap D=\emptyset\).
By condition 1 of Corollary \ref{uwaga}, we have \(A=A\cup \Phi _{d} (A)\) and \(B=B\cup \Phi _{d} (B)\).
Let \(A_1=(A\cap  \Phi^+ (A))\setminus \Phi _{d} (A)\) and \(A_2= A\setminus \Phi^+ (A)\).
Similarly, let \(B_1=(B\cap  \Phi^+ (B))\setminus \Phi _{d} (B)\) and \(B_2= B\setminus \Phi^+ (B)\).
Clearly \(A_1\), \(A_2\), \(B_1\), \(B_2\) are Lebesgue null sets.
Following the proof of the preceding lemma, \((\Phi _{d} (E)\cap E )\cup A_1 \in \mathcal{T}^+\) and \((\Phi _{d} (F)\cap F )\cup B_1 \in \mathcal{T}^+\).
Also, \((\Phi _{d} (E)\cap E ) \cap (\Phi _{d} (F)\cap F )=\emptyset\) because \(E\cap F=\emptyset\).

Let us consider the set \(\mathbb{R} \setminus (\Phi _{d} (B)\cup B_1)=(\mathbb{R}\setminus B)\cup B_2\).
Since \((\mathbb{R}\setminus B)\in \mathcal{T}^+\) and \(B_2\subset \Phi _{d}(\mathbb{R}\setminus B)\),
we have \(\mathbb{R} \setminus (\Phi _{d} (B)\cup B_1)\in \mathcal{T}^+\).
As \((\Phi _{d} (E)\cap E )\in \mathcal{T}_{d}\) and \((\mathbb{R} \setminus (\Phi _{d} (B)\cup B_1))\in \mathcal{T}^+\),
by condition (8) of Property \ref{prop}, we have
\[ (\Phi _{d} (E)\cap E ) \cup (\mathbb{R} \setminus (\Phi _{d} (B)\cup B_1) \subset \Phi^+\left( (\Phi _{d} (E)\cap E ) \cup (\mathbb{R} \setminus (\Phi _{d} (B)\cup B_1))\right). \]
Also, \(\Phi _{d} (A) \cup A_1\subset \Phi^+ ( \Phi _{d} (A) \cup A_1)\).
Since \(A_2\subset \Phi^+(\Phi _{d} (E)\cap E)\), we obtain that 
\[ A_2 \cup (\Phi _{d} (E)\cap E ) \subset \Phi^+ (A_2 \cup (\Phi _{d} (E)\cap E ) ). \]
Obviously, \(\Phi _{d} (A) \cup A_1 \cup A_2 = A\) and \(\Phi _{d} (B) \cup B_1 \cup B_2 = B\).
Setting
\[ C=A \cup (\Phi _{d} (E)\cap E ) \cup (\mathbb{R} \setminus (\Phi _{d} (B)\cup B_1)) \]
\[ D=B \cup (\Phi _{d} (F)\cap F ) \cup (\mathbb{R} \setminus (\Phi _{d} (A)\cup A_1)) \]
we have that \(\Phi _{d} (A) \cup A_1\in \mathcal{T}^+\), \(A_2 \cup (\Phi _{d} (E)\cap E ) \in \mathcal{T}^+\) and  
\((\Phi _{d} (E)\cap E ) \cup (\mathbb{R} \setminus (\Phi _{d} (B)\cup B_1)) \in \mathcal{T}^+\).
Therefore \(C\in \mathcal{T}^+\) and similarly \(D\in \mathcal{T}^+\).
Moreover, \(C\cap D=\emptyset\), \(A\subset C\) and \(B\subset D\), which completes the proof.
\end{proof}

\end{document}